\newcommand{\R}{\mathbb{R}}
\newcommand{\N}{\mathbb{N}}
\newcommand{\Ha}{H}   
\newcommand{\Ti}{A} 
\newcommand{\tTi}{\tilde{\Ti}}
\newcommand{\Tii}{B}
\newcommand{\Talt}{\tilde{T}} 
\newcommand{\X}{\mathcal{X}}
\newcommand{\Y}{\mathcal{Y}}
\newcommand{\Z}{\mathcal{Z}}
\newcommand{\W}{\mathcal{W}}
\newcommand{\range}{\mathcal{R}}
\newtheorem{thm}{Theorem}
\newtheorem{lem}{Lemma}
\newtheorem{Def}{Definition}
\newtheorem{pro}{Proposition}
\newtheorem{rem}{Remark}
\newtheorem{cor}{Corollary}
\newcommand{\fs}{\footnotesize}
\newcommand{\bb}{\large \bf }
\title{Curious ill-posedness phenomena in the composition of non-compact linear operators in Hilbert spaces}
\author{Stefan Kindermann\footnotemark[2]
\and Bernd Hofmann\footnotemark[3]}
\begin{document}

\maketitle\footnotetext[2]{Industrial Mathematics Institute, Johannes Kepler University Linz, Alternbergergstraße 69, 4040 Linz, Austria. Email: kindermann@indmath.uni-linz.ac.at}
\footnotetext[3]{Chemnitz University of Technology, Faculty of Mathematics, 09107 Chemnitz, Germany.\\ Email: hofmannb@mathematik.tu-chemnitz.de}

\begin{abstract}
We consider the composition of
operators with non-closed range in
Hilbert spaces and how the nature of
ill-posedness is affected by their composition. Specifically, we study the
\mbox{Hausdorff-,} Ces\`{a}ro-,  integration
operator, and their adjoints, as well as some combinations of those. For the composition
of the Hausdorff- and the
Ces\`{a}ro-operator,
we give
estimates of  the decay of the corresponding singular values.
As a curiosity, this provides also
an example of two practically relevant
non-compact operators, for which
their composition is compact.
Furthermore, we characterize those operators for which a composition with 
a non-compact operator gives a compact one.
\end{abstract}

\bigskip

{\parindent0em {\bf MSC2020:}}
47A52, 65J20, 40G05, 44A60

\bigskip

{\parindent0em {\bf Keywords:}}
Linear ill-posed problems, non-compact and compact operators, composition operators, degree of ill-posedness, Hausdorff moment operator, Ces\`{a}ro operator, integration operator, singular value decomposition.

\bigskip

\section{Introduction}

Let $\X,\Y,$ and $\Z$ denote infinite-dimensional real separable Hilbert spaces. In this note, we consider composite operators $T$ that are factorized as
 \begin{equation*}
 \begin{CD}
  T:\; @.  \X @> \Tii >> \Z  @> \Ti>> \Y\,,
  \end{CD}
\end{equation*}
where $\Tii: \X \to \Z$, $\Ti: \Z \to \Y$, and consequently, 
$T=\Ti\, \Tii: \X \to \Y$ are bounded linear operators with non-closed range, which means that zero belongs to the spectrum of the operators $\Ti$, $\Tii$, and $T$.
So the composite equation
\begin{equation}\label{eq:opeq1}
T\,x\,=\,\Ti\,(\Tii\,x)\,=\,y\qquad (x \in \X,\;y\in \Y)\,,
\end{equation}
but also the outer equation
\begin{equation}\label{eq:opeq2}
\Ti\,z\,=\,y \qquad (z \in \Z,\;y\in \Y)
\end{equation}
and the inner equation
\begin{equation}\label{eq:opeq3}
\Tii\,x\,=\,z \qquad (x \in \X,\;z\in \Z)
\end{equation}
represent ill-posed linear operator equations and can serve as models for inverse problems characterized by forward operators $\Ti$, $\Tii$, and $T$ with non-closed dense ranges $\range(T) \subset \Y$, $\range(\Tii) \subset \Z$, and  $\range(\Ti) \subset \Y$, respectively. The stated assumptions furthermore imply that the corresponding adjoint operators $T^*, \Ti^*$, and $\Tii^*$ are also bounded linear operators. Note that when only \eqref{eq:opeq2} is considered together with the assumption $z = \Tii x$ and $\Tii$ 
being compact,
then this problem is not ill-posed as the solution is searched in the compact set $\range(\Tii)$. This follows 
from the well-known Tikhonov theorem on the inverses on compact sets.

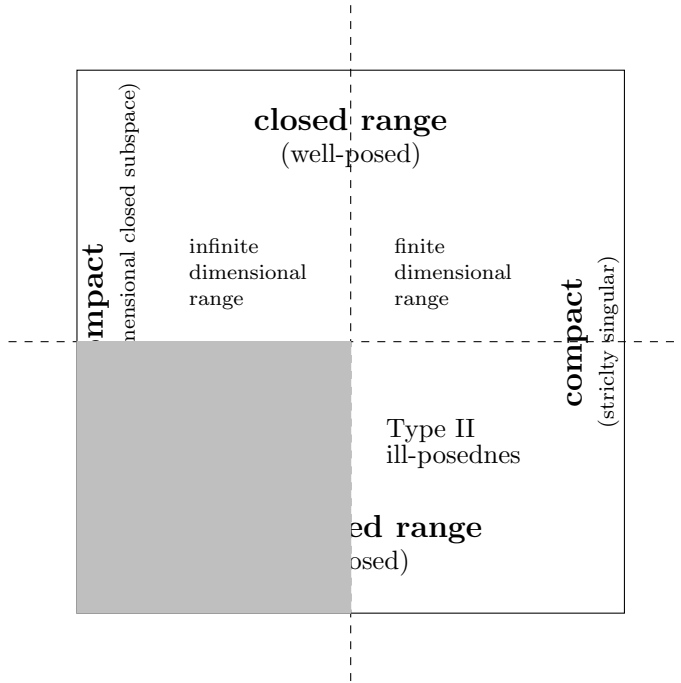
\begin{figure}
\begin{center}
\begin{tikzpicture}[scale=0.9]
\draw[draw=black] (0,0) rectangle ++(8,8);
\draw[dashed] (-1,4) -- (9,4);
\draw[dashed] (4,-1) -- (4,9);
\node[align=left] at (2.5,5.0) {\fs infinite\\[-0.5ex] \fs  dimensional\\[-0.5ex]  \fs  range}; 
\node[align=left] at (5.5,5.0) {\fs finite\\[-0.5ex] \fs  dimensional\\[-0.5ex]  \fs  range}; 
\node[align=left]  at (5.5,2.5) { Type II \\[-0.5ex]  ill-posednes};
\node[align=left] at (2.5,2.5) { Type I \\[-0.5ex]  ill-posednes};
\node[align=center] at (4.0,7) {\bb closed range\\ (well-posed)};
\node[align=center] at (4.0,1) {\bb non-closed range\\ (ill-posed)};
\node[align=center,rotate=90] at (0.5,4) {\bb non-compact\\ \fs (range contains infinite-dimensional closed subspace)};
\node[align=center,rotate=90] at (7.5,4) {\bb compact\\ \fs (striclty singular)};
\fill[draw=black,color=lightgray,opacity=0.2] (0,0) rectangle (4,4);
\end{tikzpicture} 
\end{center}
\caption{Case distinction for bounded linear operators mapping between infinite-dimensional Hilbert spaces.}\label{figcase}
\end{figure}
In this context we recall the paper \cite{Nashed86}, where Nashed distinguished for such operator equations {\sl ill-posedness of type~I} when the forward operator is non-compact and, as alternative, {\sl ill-posedness of type~II} when the forward operator is compact.
See Figure~\ref{figcase} for an illustration of the concepts in Hilbert spaces. The operators of special interest in this article are from the gray-shaded part.

Unfortunately, only for type~II the {\sl strength and degree of ill-posedness} caused by the forward operator can be simply expressed by the decay rate of the associated singular values of this operator; see Definitions~\ref{def:degree1} and \ref{def:degree2} below.
For discussions about the degree of ill-posedness of equations \eqref{eq:opeq1} with non-compact operators that are ill-posed of type~I in the sense of Nashed, we refer to the articles \cite{HofFlei99,HofKind10,nmh22}. Here, however, we assume in the sequel that the composite operator $T:\X \to \Y$ in \eqref{eq:opeq1}
is compact and possesses the singular system $$\{\sigma_i(T)>0,u_i \in \X ,v_i \in \Y\}_{i=1}^\infty\,,$$ with decreasingly ordered singular values
$$\|T\|_{\scriptscriptstyle \mathcal{L}(\X,\Y)}=\sigma_1(T) \ge \sigma_2(T) \ge \ldots \ge \sigma_i(T) \ge \sigma_{i+1}(T) \ge \ldots$$ tending to zero as $i \to \infty$ and complete orthonormal systems
$\{u_i\}_{i=1}^\infty$ in $\X$ and $\{v_i\}_{i=1}^\infty$ in $\Y$ obeying $Tu_i=\sigma_i(T)\,v_i$ as well as $T^*v_i=\sigma_i(T)\,u_i$, for all $i \in \N$.
\begin{Def}[Mild, moderate, and severe ill-posedness] \label{def:degree1}
Let the bounded linear operator $T: \X \to \Y$ be compact. Then we call the operator equation \eqref{eq:opeq1}
\begin{itemize}
   \item \textsl{mildly ill-posed} whenever the decay rate of $\sigma_i(T) \to 0$, as $i \to \infty$, is slower than any polynomial rate.
	\item \textsl{moderately ill-posed} whenever the decay rate of $\sigma_i(T)\to 0$, as $i \to \infty$, is polynomial.
	\item \textsl{severely ill-posed} whenever the decay rate of $\sigma_i(T)\to 0$, as $i \to \infty$, is higher than any polynomial rate.
\end{itemize}
\end{Def}

{\parindent0em Along} the lines of \cite{HofTau97} (see also \cite{HofKind10}), one can define in more detail an interval and a degree of ill-posedness as follows.

\begin{Def}[Interval and degree of ill-posedness] \label{def:degree2}
We denote, for an ill-posed operator equation~\eqref{eq:opeq1} with a compact forward operator $T$, the well-defined interval of the form
\begin{equation} \label{eq:interval}
[\underline{\kappa},\overline{\kappa}]=\left[\liminf \limits _{i \to \infty}
\frac{-\log(\sigma_i(T))} {\log(i)}\,,\,\limsup \limits _{i \to
\infty} \frac{-\log(\sigma_i(T))} {\log(i)}\right]  \subset [0,\infty]
\end{equation}
as {\sl interval of ill-posedness}. If $\underline{\kappa}$ and $\overline{\kappa}$ from $[0,\infty]$ are both finite positive, then we have {\sl moderate ill-posedness}, and if they even coincide as $\underline{\kappa}=\overline{\kappa}=\kappa$, then we call the equation {\sl ill-posed of degree} $\kappa>0$.  {\sl Severe ill-posedness} occurs if
the interval degenerates as $\underline{\kappa}=\overline{\kappa}=\infty$, and vice versa, {\sl mild ill-posedness} is characterized by a degeneration as  $\underline{\kappa}=\overline{\kappa}=0$.
\end{Def}

\section{A selection of linear operators with non-closed range}

To investigate the ill-posedness behaviour of composite operators $T=\Ti\, \Tii$ in the equation \eqref{eq:opeq1}, we present a selection of bounded compact and non-compact operators with non-closed range that can be exploited for $\Ti$ and $\Tii$. We start with the {\sl simple integration operator} $J: L^2(0,1) \to L^2(0,1)$ and its adjoint operator $J^*: L^2(0,1) \to L^2(0,1)$ defined as
\begin{equation} \label{eq:J}
[J\,x](s):=\int _0^s x(t) \,dt \qquad (0 \le s \le 1, \quad x \in L^2(0,1))\,,
\end{equation}
and
\begin{equation} \label{eq:Jstar}
[J^*x](t):=\int _t^1 x(s)\, ds \qquad (0 \le t \le 1, \quad x \in L^2(0,1))\,,
\end{equation}
respectively. Both operators are {\sl compact} and so is the self-adjoint specific {\sl diagonal operator} $D: \ell^2 \to \ell^2$, which appears here as
\begin{equation} \label{eq:D}
[D\,y]_j:=\frac{y_j}{j}  \qquad (j=1,2,\ldots,\quad y \in \ell^2).
\end{equation}
It is well-known for $J$ and $J^*$ and evident for $D$ that the degree of ill-posedness is {\sl one}. The singular system of $J$ and $J^*$ can be written down in an explicit manner, where we have $\sigma_i(J) \asymp i^{-1}$, for $i \to \infty$, as singular value asymptotics.
The singular system of $D$ is of the form $\{i^{-1},e^{(i)},e^{(i)}\}_{i=1}^\infty$, where $e^{(i)}$ denotes the $i$-th unit vector in $\ell^2$.

The {\sl Ces\`{a}ro operator} $C:L^2(0,1) \to L^2(0,1)$ and its adjoint operator $C^*: L^2(0,1) \to L^2(0,1)$, which attain the form
\begin{equation} \label{eq:C}
[C\,x](s):= \frac{1}{s}\,\int_0^s x(t)\,dt \qquad (0 \le s \le 1, \quad x \in L^2(0,1))\,
\end{equation}
and
\begin{equation} \label{eq:Cstar}
 [C^*x](t):= \int_t^1 \frac{x(s)}{s}\,\,ds \qquad (0 \le t \le 1, \quad x \in L^2(0,1))\, ,
\end{equation}
respectively, are non-compact operators with non-closed range; see \cite{Brown65,Leib73}. A further interesting non-compact operator with non-closed range connecting the spaces $L^2(0,1)$ and $\ell^2$ is the {\sl Hausdorff moment operator} $\Ha: L^2(0,1) \to \ell^2$ defined as
\begin{equation} \label{eq:A}
[\Ha\,x]_j:=\int _0^1 x(t)\, t^{j-1} dt \qquad (j=1,2,\ldots, \quad x \in L^2(0,1))\,,
\end{equation}
with the corresponding adjoint operator $\Ha^*:\ell^2 \to L^2(0,1)$ of the form
\begin{equation} \label{eq:Astar}
[\Ha^*y](t):= \sum _{j=1}^\infty y_j\, t^{j-1} \qquad (0 \le t \le 1,\quad y \in \ell^2),
\end{equation}
and we refer for details to \cite{Gerth21} (see also \cite{Gerth22,Haus23,HofMat22}).

For the last four non-compact operators, a degree or interval of ill-posedness in the sense of Definition~\ref{def:degree2} does not make sense. But if those operators occur as $\Ti$ or $\Tii$ in a composition $T=\Ti \, \Tii$, where $T$ is compact, then they can substantially influence
the degree of ill-posedness for $T$. This is also the case for  non-compact {\sl multiplication operators} $M: L^2(0,1) \to L^2(0,1)$ with non-closed range
\begin{equation} \label{eq:M}
[M\,x](t):=m(t)\,x(t) \qquad (0 \le t \le 1, \quad x \in L^2(0,1))\,,
\end{equation}
for which the multiplier functions $m \in L^\infty(0,1)$ possess essential zeros in $(0,1)$. We refer in this context also to the papers \cite{Hof06,HofWolf05}.

\section{Can a non-compact operator in composition destroy the degree of ill-posedness of a compact operator?}

In the past years, equations \eqref{eq:opeq1} with compact composite operators $T=\Ti \, \Tii$ have been studied under the assumption that $\Tii$ is compact and $\Ti$ is a non-compact operator with non-closed range. It had been an open question whether the non-compact operator $\Ti$ can
amend the degree of ill-posedness of the compact operator $\Tii$ in the composition $T$.

The first studies in \cite{Freitag05,HofWolf05,HofWolf09} investigated the case $T=M \, J: L^2(0,1) \to L^2(0,1)$ with multiplication operators $\Ti:=M:L^2(0,1) \to L^2(0,1)$ from \eqref{eq:M} and the simple integration operator $\Tii:=J:L^2(0,1) \to L^2(0,1)$ from \eqref{eq:J}.
Indeed, all these studies indicated the asymptotics $\sigma_i(T) \asymp i^{-1}$ as $i \to \infty$, even for multiplier functions $m$ with strong (exponential-type) zeros that occur in inverse problems of option pricing; see~\cite{HeinHof03}.
This means that along the lines of those studies, the non-compact multiplication operator $M$ does not destroy the degree of ill-posedness {\sl one} of the compact operator $J$ in such composition.

However, the situation changed when for $\Tii:=J$,  the multiplication operator $M$   was replaced with the non-compact Hausdorff moment operator $\Ti:=\Ha:L^2(0,1) \to \ell^2$ from \eqref{eq:A}. In the article \cite{HofMat22}, the assertion of the following proposition could be shown in the context of Corollary~2 and Theorem~3 ibid.

\begin{pro} \label{pro:HM}
The operator $T=\Ha \, J: L^2(0,1) \to \ell^2$ with $\Ha:L^2(0,1) \to \ell^2$ from \eqref{eq:A} and $J:L^2(0,1) \to L^2(0,1)$ from \eqref{eq:J} obeys for some positive constants $\underline{c}$ and $\overline{c}$ the inequalities
\begin{equation} \label{eq:pro1}
\exp(-\underline{c}\,i) \le \sigma_i(T) \le \frac{\overline{c}}{i^{3/2}}
\end{equation}
for sufficiently large indices $i \in \mathbb{N}$.
\end{pro}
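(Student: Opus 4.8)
Write $\mathbf 1\in L^2(0,1)$ for the constant function $t\mapsto1$, so $\Ha\mathbf 1=(1/j)_{j\ge1}$. The starting point is the rank-one identity $J+J^*=\langle\cdot,\mathbf 1\rangle\,\mathbf 1$ (integration by parts), which shows that $T=\Ha J=\langle\cdot,\mathbf 1\rangle\,\Ha\mathbf 1-\Ha J^*$ differs from $-\Ha J^*$ by a rank-one operator. By the Weyl-type inequality $\sigma_{m+n-1}(A+B)\le\sigma_m(A)+\sigma_n(B)$ (taken with $m=2$) this gives $\sigma_i(T)\le\sigma_{i-1}(\Ha J^*)$ for $i\ge2$, so for the upper bound it suffices to prove $\sigma_i(\Ha J^*)\le c\,i^{-3/2}$. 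Passing to the adjoint, $\sigma_i(\Ha J^*)=\sigma_i(J\Ha^*)$, and using $\Ha^*e_j=t^{\,j-1}$, $J\Ha^*e_j=t^{\,j}/j$, one computes that $(J\Ha^*)^*(J\Ha^*)$ is represented by the Gram matrix $\bigl(\tfrac{1}{jk\,(j+k+1)}\bigr)_{j,k\ge1}$ of the system $\{t^{\,j}/j\}_{j\ge1}\subset L^2(0,1)$, and equivalently that $(J\Ha^*)(J\Ha^*)^*$ is the integral operator on $L^2(0,1)$ with kernel $\sum_{j\ge1}\tfrac{t^{\,j}s^{\,j}}{j^2}=\mathrm{Li}_2(ts)$, the dilogarithm, which is real-analytic off the corner $(1,1)$ and behaves like $\mathrm{const}+(1-ts)\log(1-ts)+\dots$ there.

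The heart of the upper bound is the assertion that $J\Ha^*$ lies in the Schatten class $\mathcal S_{2/3}$, i.e.\ $\sum_i\sigma_i(J\Ha^*)^{2/3}<\infty$. Granting this, monotonicity of the singular values gives $i\,\sigma_i(J\Ha^*)^{2/3}\le\sum_{l\le i}\sigma_l(J\Ha^*)^{2/3}\le\|J\Ha^*\|_{\mathcal S_{2/3}}^{2/3}$, hence $\sigma_i(J\Ha^*)\le\|J\Ha^*\|_{\mathcal S_{2/3}}\,i^{-3/2}$ and therefore $\sigma_i(T)\le\overline c\,i^{-3/2}$. To prove the Schatten bound I would split $(0,1)=\bigcup_{l\ge0}I_l$ with $I_l=(1-2^{-l},\,1-2^{-l-1}]$ and write $J\Ha^*=\sum_{l\ge0}U_l$, where $U_l$ is $J\Ha^*$ followed by multiplication by $\chi_{I_l}$; since $p=2/3\le1$, the Schatten quasi-norm satisfies $\|\sum_l U_l\|_{\mathcal S_p}^p\le\sum_l\|U_l\|_{\mathcal S_p}^p$, so it remains to show $\sum_l\|U_l\|_{\mathcal S_{2/3}}^{2/3}<\infty$. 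For fixed $l$, an affine change of variables mapping $I_l$ onto $(0,1)$ turns $U_lU_l^*$ into an integral operator whose kernel is a constant multiple of $\chi\otimes\chi$ (contributing one singular value of order $2^{-l}$) plus a kernel extending holomorphically to a complex neighbourhood of $[0,1]^2$ that does not shrink as $l\to\infty$ — the corner singularity of $\mathrm{Li}_2(ts)$ is pushed to a uniformly positive distance — with analytic norm of order $(1+l)2^{-l}$; the analytic part then has geometrically decaying singular values, $\sigma_k(U_l)^2\lesssim(1+l)2^{-l}\rho^{-k}$ with $\rho>1$ independent of $l$. Summing the geometric series yields $\|U_l\|_{\mathcal S_{2/3}}^{2/3}\lesssim(1+l)^{1/3}2^{-l/3}$, which is summable. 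Making this last step precise — i.e.\ establishing the $l$-uniform geometric decay of $\sigma_k(U_l)$ by controlling the holomorphic extension of the rescaled $\mathrm{Li}_2$-kernel, in particular the logarithmic factor near $1$ — is the main obstacle. (An alternative is to pass to the variable $x=\log j$, which converts the Gram matrix $\bigl(\tfrac1{jk(j+k+1)}\bigr)$ into an integral operator with a smooth, exponentially decaying kernel, from which faster-than-polynomial singular-value decay, in particular the stated bound, follows.)

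For the lower bound $\sigma_i(T)\ge\exp(-\underline c\,i)$ I would work on the $i$-dimensional subspace $V_i:=\mathrm{span}\{1,t,\dots,t^{i-1}\}\subset L^2(0,1)$. Since $\Ha$ and $J$ are injective, $T|_{V_i}$ is injective, so the min–max principle gives $\sigma_i(T)\ge\sigma_{\min}(T|_{V_i})=\|(T|_{V_i})^{-1}\|^{-1}$. From $T(t^{k})=\tfrac1{k+1}\bigl(\tfrac1{j+k+1}\bigr)_{j\ge1}$ one sees that the coefficients $a_0,\dots,a_{i-1}$ of any $p=\sum_ka_kt^k\in V_i$ are already determined by the first $i$ entries of $Tp$ via $\bigl([Tp]_j\bigr)_{j=1}^{i}=M^{(i)}a$ with the weighted Cauchy matrix $M^{(i)}_{jk}=\tfrac1{(k+1)(j+k+1)}$; the classical exponential bound on the condition number of Hilbert/Cauchy matrices gives $\|(M^{(i)})^{-1}\|\le e^{Ci}$, whence $\|a\|\le e^{Ci}\|Tp\|$ and $\|p\|_{L^2}\le\|p\|_{L^\infty}\le\sqrt i\,\|a\|\le e^{(C+o(1))i}\|Tp\|$. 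Thus $\|(T|_{V_i})^{-1}\|\le e^{\underline c\,i}$ for a suitable constant and all large $i$, which is the asserted lower bound; the only nontrivial ingredient is the exponential (rather than super-exponential) estimate on $\|(M^{(i)})^{-1}\|$, which is classical but must be quoted carefully.
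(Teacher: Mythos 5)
First, for orientation: the paper does not prove Proposition~\ref{pro:HM} itself but imports it from \cite{HofMat22}; the relevant techniques are nevertheless on display in the proofs of Theorems~\ref{thm:main} and~\ref{thm:lowerbounds} for the analogous composition $D\,\Ha$. Your \emph{lower bound} is essentially that method in different clothing: the min--max over $V_i=\operatorname{span}\{1,t,\dots,t^{i-1}\}$ is the correct variational tool, the identity $[T\,t^{k}]_j=\tfrac{1}{(k+1)(j+k+1)}$ is right, and everything reduces to the exponential bound $\|(M^{(i)})^{-1}\|\le e^{Ci}$ for the weighted shifted Hilbert (Cauchy) matrix --- exactly the role played by Todd's estimate $\|\mathcal{H}_N^{-1}\|\le c\exp(4N)$ in the proof of Theorem~\ref{thm:lowerbounds}. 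That external input is classical (Cauchy-determinant formulas, or \cite{Tod54} adapted to the shift), and you correctly flag that it is the one thing to be cited carefully. This half is sound.

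The \emph{upper bound} is where you genuinely diverge from the paper, and where the gap sits. The paper's route (for $D\,\Ha$ in Theorem~\ref{thm:main}, and in \cite{HofMat22} for $\Ha\,J$) is short and elementary: expand in shifted Legendre polynomials, use $\sum_{i>n}\sigma_i^2\le\|T(I-Q_n)\|_{HS}^2$ together with $L_i\perp t^{j-1}$ for $j<i$ and $\|(I-Q_n)h_{j-1}\|\le1$ to get $\sum_{i>n}\sigma_i^2\lesssim n^{-2}$, then convert tails to terms. Your route --- the rank-one reduction $\Ha J=\langle\cdot,\mathbf{1}\rangle\,\Ha\mathbf{1}-\Ha J^*$ with the Weyl shift $\sigma_i(T)\le\sigma_{i-1}(\Ha J^*)$ (both correct), the identification of $(J\Ha^*)(J\Ha^*)^*$ with the $\mathrm{Li}_2(ts)$ kernel (correct), and then the claim $J\Ha^*\in\mathcal{S}_{2/3}$ --- aims at a strictly \emph{stronger} statement: $\mathcal{S}_{2/3}$ membership forces $\sigma_i=o(i^{-3/2})$ with $\ell^{2/3}$-summability, which is not implied by the proposition and would be false if the decay were exactly of order $i^{-3/2}$ (a possibility the paper explicitly leaves open). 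That is not a refutation, but it places the entire weight on the one step you yourself call the main obstacle: the $l$-uniform geometric decay of $\sigma_k(U_l)$. As written, this is asserted, not proved. I believe it is fillable --- after rescaling $I_l$ to $(0,1)$ the complexified singular set $\{ts=1\}$ and the branch cut of $\mathrm{Li}_2$ on $[1,\infty)$ stay at distance $O(1)$ from $[0,1]^2$ uniformly in $l$, the trace of $U_lU_l^*$ is $O(2^{-l})$, and interpolating $\lambda_k(U_lU_l^*)\le\min\{C2^{-l},M\rho^{-k}\}$ already makes $\sum_l\|U_l\|_{\mathcal{S}_{2/3}}^{2/3}$ converge without the delicate $(1+l)2^{-l}$ bookkeeping --- but none of this is in the proposal, and the uniform control of the holomorphic extension (including the logarithmic factor near the cut) is real work. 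In its current state the upper-bound half is a plausible program with an unproved central estimate, whereas the Legendre/Hilbert--Schmidt argument delivers the stated $i^{-3/2}$ bound in a few lines.
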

As a consequence of Proposition~\ref{pro:HM}, the interval of ill-posedness for the composition $\Ha \, J$ is a subset of the interval $[\frac{3}{2},\infty]$. This was the first example in the literature to demonstrate with respect to $\Tii:=J$ the degree-destroying potential of a non-compact operator $\Ti$ in such a composition. Unfortunately, by now it could not be cleared if $T=\Ha \, J$ really leads to an exponentially (severely) ill-posed problem or whether it leads to a moderate ill-posed problem.

So it was exciting to replace the Hausdorff moment operator $\Ha$ (used as $\Ti$) with the non-compact  Ces\`{a}ro operator
$\Ti:=C: L^2(0,1) \to L^2(0,1)$ from \eqref{eq:C} in the composition $T=C \, J$. The recent paper~\cite{DFH24} has proven that we have, for such $T$, the asymptotics $\sigma_i(T) \asymp i^{-2}$ as $i \to \infty$, which means that the degree of ill-posedness is {\it two} for $T=C \, J$.
Hence, $C$ increases in that composition the degree of ill-posedness of $J$ just by one. Taking into account that $J^2=M \, T$ with the multiplication operator $M:L^2(0,1) \to L^2(0,1)$ and the multiplier function $m(t)=t$, one can see again that such multiplication operator does not
amend the degree of ill-posedness, because the asymptotics $\sigma_i(J^2) \asymp i^{-2}$, as $i \to \infty$, is well-known; see for example \cite{Ramlau20}.

\section{The curious case that the composition of two non-compact operators is compact}

It was surprising for the authors that also two
{\em non-compact} operators $\Ti$ and $\Tii$ with non-closed range can generate a {\em compact} operator by composition,
$T=\Ti \, \Tii$.
Indeed, let $\Tii:=C^*: L^2(0,1) \to L^2(0,1)$ from \eqref{eq:Cstar} and $\Ti:=\Ha: L^2(0,1) \to \ell^2$ from \eqref{eq:A}.
Then we have such a situation as the next proposition indicates.

\begin{pro} \label{pro:fact}
The operator $T: L^2(0,1) \to \ell^2$ defined as $T:=\Ha \, C^*$ with the non-compact operators $\Ha$ from \eqref{eq:A} and $C^*$ from \eqref{eq:Cstar} is compact and even a Hilbert-Schmidt operator.
\end{pro}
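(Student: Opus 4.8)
\emph{Proof proposal.} The plan is to reduce $T=\Ha\,C^*$ to a composition that is manifestly Hilbert--Schmidt, by establishing the operator identity $\Ha\,C^*=D\,\Ha$, where $D:\ell^2\to\ell^2$ is the compact diagonal operator from \eqref{eq:D}. First I would fix $x\in L^2(0,1)$ and, for $j\in\N$, write the $j$-th coordinate of $Tx$ as the iterated integral
\begin{equation*}
[Tx]_j=[\Ha\,C^*x]_j=\int_0^1\Bigl(\int_t^1\frac{x(s)}{s}\,\d s\Bigr)t^{j-1}\,\d t,
\end{equation*}
an integral over the triangle $\{(t,s):0\le t\le s\le 1\}$.

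Next I would interchange the order of integration. Since $x\in L^2(0,1)\subset L^1(0,1)$, Tonelli's theorem gives
\begin{equation*}
\int_0^1\int_t^1\frac{|x(s)|}{s}\,t^{j-1}\,\d s\,\d t=\int_0^1\frac{|x(s)|}{s}\Bigl(\int_0^s t^{j-1}\,\d t\Bigr)\d s=\frac1j\int_0^1|x(s)|\,s^{j-1}\,\d s<\infty,
\end{equation*}
so Fubini's theorem applies; the same computation without absolute values yields
\begin{equation*}
[Tx]_j=\frac1j\int_0^1 x(s)\,s^{j-1}\,\d s=\frac1j[\Ha x]_j=[D\,\Ha x]_j,
\end{equation*}
that is, $T=\Ha\,C^*=D\,\Ha$ as bounded operators from $L^2(0,1)$ into $\ell^2$.

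Finally I would conclude as follows: the Hausdorff moment operator $\Ha:L^2(0,1)\to\ell^2$ is bounded (for instance because $\Ha\,\Ha^*$ is the classical Hilbert matrix $\bigl(\tfrac{1}{j+k-1}\bigr)_{j,k\ge 1}$, a bounded operator on $\ell^2$ of norm $\pi$), whereas $D$ is Hilbert--Schmidt since $\sum_{j=1}^\infty j^{-2}=\pi^2/6<\infty$. As the Hilbert--Schmidt operators form a two-sided operator ideal, $T=D\,\Ha$ is a Hilbert--Schmidt operator, with $\|T\|_{\mathrm{HS}}\le\|D\|_{\mathrm{HS}}\|\Ha\|<\infty$, and therefore compact.

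The argument is short and I do not anticipate a serious obstacle; the only point requiring a little care is the application of Fubini near $s=0$, where $x(s)/s$ need not be integrable on its own, but the triangular integration domain together with $x\in L^1(0,1)$ makes the double integral absolutely convergent. The substantive content --- and the explanation for compactness emerging from two non-compact factors --- is the algebraic identity $\Ha\,C^*=D\,\Ha$: composing the moment operator with $C^*$ on the right produces, after reindexing, exactly the smoothing effect of the compact diagonal operator $D$.
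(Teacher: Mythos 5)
Your proposal is correct and follows essentially the same route as the paper: both establish the key identity $\Ha\,C^*=D\,\Ha$ and then conclude via the ideal property of Hilbert--Schmidt operators and the bound $\|T\|_{HS}\le\|D\|_{HS}\,\|\Ha\|$. The only (cosmetic) difference is that you compute the iterated integral by Fubini--Tonelli where the paper uses integration by parts, and your explicit justification of the interchange near $s=0$ is a welcome extra bit of care.
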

\begin{proof}
We have that
$$T=\Ha \, C^*=D \, \Ha\,,$$
with the {\sl compact} diagonal operator $D: \ell^2 \to \ell^2$ from \eqref{eq:D}. This can be seen by inspection of the $j$-th component of $Tx$, which can be written as
$$[\Ha(C^*x)]_j=\int_0^1 \left(\int_t^1 \frac{x(s)}{s}\,ds \right)\,t^{j-1}\,dt\,. $$
Integration by parts yields moreover
$$[\Ha(C^*x)]_j= \frac{1}{j} \int_0^1 x(t)\,t^{j-1}\,dt =\frac{1}{j}\,[\Ha x]_j=[D(\Ha x)]_j\,. $$
Since $D$ is a compact operator, this property carries over to the composition $T=D \, \Ha$ of $D$ with the bounded linear operator $\Ha$. In the same manner, the Hilbert-Schmidt operator $D$ with the Hilbert-Schmidt norm $\|D\|_{HS}=\sqrt{\sum_{i=1}^\infty \frac{1}{i^2}}<\infty$ leads to a Hilbert-Schmidt property of $T$ by favour of the inequality $\|T\|_{HS} \le \|D\|_{HS}\,\|\Ha\|_{\scriptscriptstyle \mathcal{L}(L^2(0,1),\ell^2)}$.
\end{proof}
\begin{rem} {\rm We note that of course the same fact can also be formulated for the adjoint operator $T^*=C \,  \Ha^*:\ell^2 \to L^2(0,1)$, where $C$ from \eqref{eq:C} and $\Ha^*$ from \eqref{eq:Astar} are again non-compact operators with non-closed range, but $T^*=\Ha^*\, D$ is compact. Let us recall that  the adjoint of an operator completely mirrors the 
ill-posedness of the original operator; see, for instance, \cite{Gerth21}: $T$ is ill-posed of type I (resp. II) if and 
only if $T^*$ is ill-posed of type I (resp. II). Moreover, it follows by the properties of the singular values that the degree of ill-posedness of  $T$ is identical to that of $T^*$. On the other hand, calculating the 
singular values seems to be equally difficult for both operators, $T$, $T^*$.
\hfill\fbox{}}
\end{rem}

\begin{rem}
\rm Although the phenomenon $\text{Non-compact} \circ \text{Non-compact} = \text{Compact}$ is 
well-known in the literature for ``academic examples''---see the Section~\ref{sec:ncnc} below---we 
found the above situation  ``curious'', since 
it occurs for two operators (Ces\`{a}ro- and Hausdorff operator) 
arising from practically relevant problems. We will show  in Section~\ref{sec:ncnc} that this is actually 
a characteristic of type I ill-posedness.
\end{rem}

Along the lines of the proof of \cite[Theorem~3]{HofMat22} one can prove the following theorem.

\begin{thm} \label{thm:main}
For the composition $T=D \, \Ha=\Ha \, C^*: L^2(0,1) \to \ell^2$ with the operators\linebreak \mbox{$\Ha: L^2(0,1) \to \ell^2$} from \eqref{eq:A}, $D: \ell^2 \to \ell^2$ from \eqref{eq:D} and $C^*: L^2(0,1) \to L^2(0,1)$ from \eqref{eq:Cstar},
there exists a positive constant $c$ such that
\begin{equation} \label{eq:rough2}
\sigma_i(T)\le \frac{c}{i^{3/2}}\quad (i=1,2,\ldots),
\end{equation}
and the degree of ill-posedness of $T$ is at least 3/2.
\end{thm}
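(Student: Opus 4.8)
The plan is to bound the singular values $\sigma_i(T)$ from above by exploiting the factorization $T = D\,\Ha$ together with an estimate on how fast the operator $\Ha$ (hence $D\,\Ha$) can be approximated by finite-rank operators. Recall that $\sigma_{i+1}(T)$ equals the approximation number $a_{i+1}(T) = \inf\{\|T - R\| : \operatorname{rank} R \le i\}$, so it suffices to produce, for each $i$, a rank-$i$ operator $R_i$ with $\|D\,\Ha - R_i\|_{\mathcal L(L^2,\ell^2)} \le c\, i^{-3/2}$.

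The key estimate is a polynomial-approximation bound, exactly as in the proof of \cite[Theorem~3]{HofMat22}. Write $[\Ha x]_j = \langle x, t^{j-1}\rangle_{L^2(0,1)}$, so $[D\,\Ha x]_j = \frac1j \langle x, t^{j-1}\rangle$. The idea is to split $\ell^2$ into the first $n$ coordinates and the tail. On the tail $j > n$, the factor $\frac1j$ together with the uniform bound $\|t^{j-1}\|_{L^2}^2 = \frac{1}{2j-1}$ gives that the tail part of $D\,\Ha$ has operator norm controlled by $\big(\sum_{j>n} \frac{1}{j^2(2j-1)}\big)^{1/2} \asymp n^{-3/2}$. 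The first-$n$-coordinates part, $x \mapsto (\frac1j\langle x, t^{j-1}\rangle)_{j=1}^n$, is already finite rank (rank $\le n$). Hence $a_{n+1}(T) \le c\, n^{-3/2}$, which after reindexing and adjusting the constant yields \eqref{eq:rough2} for all $i$; since $\sigma_i(T) \ge \sigma_{i+1}(T)$ and the bound is monotone this is clean. The conclusion on the degree of ill-posedness follows immediately: from $\sigma_i(T) \le c\, i^{-3/2}$ we get $-\log \sigma_i(T) \ge \frac32 \log i - \log c$, so $\underline\kappa = \liminf_i \frac{-\log\sigma_i(T)}{\log i} \ge \frac32$, i.e. the degree of ill-posedness is at least $3/2$ in the sense of Definition~\ref{def:degree2}.

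The main obstacle is getting the exponent $3/2$ rather than the naive $1/2$. A crude argument using only $\|\Ha\|$ bounded and the Hilbert--Schmidt tail of $D$ gives $\sigma_i(D\Ha) \le \|D\Ha\|_{HS}$-type tail bounds of order $n^{-1/2}$ from the $\frac1j$ factors alone; the extra $n^{-1}$ comes from also using the decay $\|t^{j-1}\|_{L^2} \asymp j^{-1/2}$ of the moment functions themselves, which forces one to be careful that the truncation is done in the $\ell^2$ (image) variable and that the cross-terms vanish because the truncation projection commutes appropriately. One should double-check that $\big(\sum_{j>n} j^{-2}(2j-1)^{-1}\big)^{1/2} \le c\, n^{-3/2}$, which is the elementary integral-comparison step $\sum_{j>n} j^{-3} \asymp n^{-2}$, giving the square root $n^{-1}$ — wait, that only gives $n^{-1}$; to reach $n^{-3/2}$ one uses that the operator norm of the diagonal-times-moment tail is actually the \emph{supremum}-type quantity $\sup\big(\cdots\big)$ combined with the Cauchy--Schwarz pairing, so the careful bookkeeping of which norm (operator vs.\ Hilbert--Schmidt) is used at which stage is precisely where the argument of \cite{HofMat22} must be followed line by line. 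That cross-over between an $\ell^2$-sum over the image index and an $L^2$-pairing in the domain is the delicate point; everything else is routine.
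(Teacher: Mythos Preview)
Your approximation-number route does not reach $i^{-3/2}$; the gap you yourself flag is real and your proposed patch is wrong. With $P_n$ the projection onto the first $n$ coordinates of $\ell^2$, the tail operator $(I-P_n)D\Ha$ has \emph{operator} norm genuinely of order $n^{-1}$, not $n^{-3/2}$: testing with $x=\sqrt{2n-1}\,t^{n-1}$ gives
\[
\|(I-P_n)D\Ha\, x\|_{\ell^2}^2=\sum_{j>n}\frac{2n-1}{j^2(j+n-1)^2}\;\asymp\; n^{-2},
\]
so no ``sup-type'' reinterpretation of the operator norm can save the extra factor $n^{-1/2}$. Hence $a_{n+1}(T)\asymp n^{-1}$ is the best the approximation-number argument with this truncation yields.

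The missing ingredient is exactly what the paper uses (via \cite[Lemma~4]{HofMat22}): work with the \emph{Hilbert--Schmidt} norm of the truncation error, not the operator norm, and then exploit monotonicity of the singular values. Concretely, $\|(I-P_n)D\Ha\|_{HS}^2=\sum_{j>n}\frac{1}{j^2(2j-1)}\le c_1 n^{-2}$, which bounds the tail sum $\sum_{i>n}\sigma_i^2(T)$; then $n\,\sigma_{2n}^2(T)\le\sum_{i=n+1}^{2n}\sigma_i^2(T)\le c_1 n^{-2}$ gives $\sigma_{2n}(T)\le c\,n^{-3/2}$. The paper carries this out with a domain-side truncation by Legendre polynomials (which is where the orthogonality you allude to enters), but your range-side truncation by $P_n$ would give the identical HS bound more directly---provided you switch from operator norm to HS norm and add the monotonicity step. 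Without that step, the argument is incomplete.
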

\begin{proof}
A main tool for the proof is the system $\{L_j\}_{j=1}^\infty$ of shifted Legendre polynomials
which represent a complete orthonormal system in the Hilbert space $L^2(0,1)$.
This system is the result of the Gram-Schmidt orthonormalization process of the system $\{t^{j-1}\}_{j=1}^\infty$ of
monomials. Consequently, we have
$${\operatorname{span}}(1,t,\ldots,t^{j-1})={\operatorname{span}}(L_1,L_2,\ldots,L_j).$$
Hence, we have for $m \ge 2$ that $L_m \perp t^{j-1}$,
for all $1 \le j < m$.
As has been proven by \cite[Proposition~3]{HofMat22}, we have for the Hilbert-Schmidt operator $T=D \, \Ha$
$$\sum_{i=n+1}^\infty \sigma_i^2(T) \le \|T(I-Q_n)\|_{HS}^2\,, $$
where $Q_n$ denotes the projection onto $\operatorname{span}(L_1,...,L_n)$.
From that we derive here the estimates
\begin{equation} \label{eq:sq}
\sum_{i=n+1}^\infty \sigma_i^2(T) \le \sum_{i=n+1}^\infty \|T(I-Q_n)L_i\|_{\ell^2}^2 = \sum_{i=n+1}^\infty \|T\,L_i\|_{\ell^2}^2=\sum_{i=n+1}^\infty \sum_{j=1}^\infty \langle D(\Ha L_i),e^{(j)} \rangle_{\ell^2}^2\,.
\end{equation}
By exploiting the system of normalized functions
$$ h_{j}(s):= \sqrt{2j+1}\,s^{j}\,\in L^{2}(0,1) \quad (j=0,1,2,\dots),$$
we can rewrite the terms of the form $\langle D(\Ha L_i),e^{(j)} \rangle_{\ell^2}^2$ in \eqref{eq:sq} as
$$ \langle D(\Ha L_i),e^{(j)} \rangle_{\ell^2} = \frac{1}{j^2}\langle \Ha  L_i,e^{(j)} \rangle_{\ell^2}= \frac{1}{j^2}\left(\int_0^1\frac{h_{j-1}(s)\,L_i(s)\,ds}{\sqrt{2j-1}} \right)^2=\frac{1}{j^2(2j-1)}\langle h_{j-1},L_i \rangle_{L^2(0,1)}^2\,.   $$
Taking into account $\|h_j\|_{L^2(0,1)}=1$  and the orthogonality relations between $h_j$ and $L_i$ we derive now from \eqref{eq:sq} the estimate
$$\sum_{i=n+1}^\infty \sigma_i^2(T) \le \sum_{j=n+2}^\infty \frac{1}{j^2(2j-1)}\sum_{i=n+1}^\infty \langle h_{j-1},L_i \rangle_{L^2(0,1)}^2=\sum_{j=n+2}^\infty \frac{1}{j^2(2j-1)}\|(I-Q_n)h_{j-1}\|_{L^2(0,1)}^2\,,  $$
and with $\|(I-Q_n)h_{j-1}\|_{L^2(0,1)} \le 1$, we can further estimate as
$$\sum_{i=n+1}^\infty \sigma_i^2(T) \le \sum_{j=n+2}^\infty \frac{1}{j^2(2j-1)} \le c_1\,n^{-2}\,   $$
for some constant $c_1>0$. We recall now from \cite[Lemma~4]{HofMat22} the fact that an estimate  $$\sum_{i=n+1}^\infty \sigma_i^2(T) \le c_1\,n^{-2\gamma}\;\;(n \in \N), \;\; \mbox{for} \;\; \gamma>0\;\;\mbox{and} \;\; c_1>0\,,$$  implies
the existence of a constant $c_2>0$ such that $\sigma_i^2(T) \le c_2\,i^{-(2\gamma+1)}\;(i \in \N)$. Applying this fact with $\gamma=1$ yields the inequality \eqref{eq:rough2}, which completes the proof.
\end{proof}
\begin{rem} {\rm The singular system of the compact operator $D: \ell^2 \to \ell^2$ mentioned above indicates the asymptotics $\sigma_i(D) \asymp i^{-1}$ as $i \to \infty$. From Theorem~\ref{thm:main}, however, we see that there is some constant $K>0$ such that
$$
\sigma_{i}(D \, \Ha )/\sigma_{i}(D) \leq
\frac{K}{i^{1/2}}\quad (i=1,2,\ldots).$$
Consequently, as for the composition $\Ha  \, J$ (see Proposition~\ref{pro:HM}) also here for $D \, \Ha $ the non-compact operator $\Ha $ has the power to increase the decay rate of the singular values of the respective compact operators by an exponent of at least $1/2$.
\hfill\fbox{}
}\end{rem}

As in Proposition~\ref{pro:HM} (cf.~\cite[Corollary~3.6]{HofMat22}) for the composition $\Ha  \, J $, one can also here verify for the composition
$D \, \Ha = \Ha \, C^*$ lower bounds  of exponential type for the singular values. We make this explicit by the following theorem.

\begin{thm} \label{thm:lowerbounds}
Consider the operator $T  =  \Ha  \, C^*=D \, \Ha $ from Theorem~\ref{thm:main}.
Then we have the lower bound
\begin{equation} \label{eq:lowb}
\frac{c_0}{i} \exp(-2 i) \leq  \sigma_i(T) \quad (i=1,2,\ldots)\,,
\end{equation}
with some constant $c_0>0$.
\end{thm}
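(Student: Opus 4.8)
The plan is to obtain the lower bound by exhibiting, for each index $i$, a finite-dimensional subspace on which $T = D\,\Ha$ acts with controlled singular values from below, and then invoke the min-max (Courant--Fischer) characterization $\sigma_i(T) = \max_{\dim V = i} \min_{0 \neq x \in V} \|Tx\| / \|x\|$. Since $T = D\,\Ha$ and $D$ is the explicit diagonal operator with $[Dy]_j = y_j/j$, it suffices to understand $\Ha$ restricted to a well-chosen $i$-dimensional subspace of $L^2(0,1)$. The natural choice, mirroring the argument behind Proposition~\ref{pro:HM} in \cite[Corollary~3.6]{HofMat22}, is the space $V_i = \operatorname{span}(1, t, \dots, t^{i-1}) = \operatorname{span}(L_1, \dots, L_i)$ of polynomials of degree $< i$.

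The key computation is then a lower bound on $\min_{0 \neq x \in V_i} \|D\,\Ha x\|_{\ell^2} / \|x\|_{L^2(0,1)}$. For $x \in V_i$, write $x(t) = \sum_{k=1}^{i} a_k t^{k-1}$; then $[\Ha x]_j = \sum_{k=1}^i a_k \int_0^1 t^{j+k-2}\,dt = \sum_{k=1}^i a_k/(j+k-1)$, so the relevant finite section of $\Ha$ on the monomial basis is (a piece of) the classical Hilbert matrix. The quantity to control from below is thus the smallest singular value of the $i \times i$ matrix $G_i$ with entries $[G_i]_{jk} = \frac{1}{j}\cdot\frac{1}{j+k-1}$ (the $j$-th row scaled by $1/j$ from $D$), relative to the Gram matrix of the monomials $\{t^{k-1}\}_{k=1}^i$, which is itself the Hilbert matrix $H_i$ with $[H_i]_{\ell k} = 1/(\ell+k-1)$. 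Equivalently, using the orthonormal basis $\{L_j\}$ instead, one needs a lower bound for the smallest singular value of the $i \times i$ matrix $\big(\langle D\,\Ha L_k, e^{(j)}\rangle\big)_{j,k}$; but it is cleaner to work with the monomials and exploit known quantitative estimates. I would use the standard fact that the smallest eigenvalue of the $n \times n$ Hilbert matrix decays like $\lambda_{\min}(H_n) \asymp \sqrt{n}\,2^{-4n}$ (or the cruder bound $\lambda_{\min}(H_n) \ge c\, 16^{-n}$), together with the explicit diagonal damping by $D$ which contributes a polynomial factor $\gtrsim 1/i$, to conclude $\sigma_i(T) \ge \frac{c_0}{i}\exp(-2i)$ after taking square roots and absorbing constants — note $e^{-2i}$ is the square root of something like $e^{-4i} \sim 16^{-i}$ up to constants, which matches the factor $4$ in the Hilbert-matrix exponent.

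More concretely, I would proceed as follows. First, fix $x \in V_i$ with $\|x\|_{L^2} = 1$ and expand in the shifted Legendre basis, $x = \sum_{k=1}^i c_k L_k$ with $\sum c_k^2 = 1$. Second, bound $\|Tx\|_{\ell^2}^2 = \sum_{j=1}^\infty \frac{1}{j^2}\big(\sum_{k=1}^i c_k [\Ha L_k]_j\big)^2$ from below by keeping only the single term $j = i$ (or a small block near $j = i$), reducing the problem to estimating how small $\sum_k c_k [\Ha L_k]_i$ can be over the unit sphere — but since a single linear functional can vanish on the sphere, one instead keeps the full block $j = 1, \dots, i$ and recognizes that the matrix $\big([\Ha L_k]_j\big)_{1 \le j,k \le i}$ is, up to the triangular change of basis from $\{t^{k-1}\}$ to $\{L_k\}$, essentially the Hilbert matrix, hence invertible with an explicit (exponentially small) lower bound on its smallest singular value. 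Third, combine the diagonal weights $1/j^2 \ge 1/i^2$ for $j \le i$ with that bound. The main obstacle I anticipate is the bookkeeping of constants in the Hilbert-matrix smallest-singular-value estimate — getting the exponential constant to come out as exactly $2$ (rather than some other explicit number) in \eqref{eq:lowb} — and making sure the polynomial prefactor is correctly $1/i$ and not a larger power; these are the places where I would follow \cite[Corollary~3.6]{HofMat22} closely rather than reprove the spectral estimate for the Hilbert matrix from scratch. Everything else is the routine min-max argument plus the change of basis between monomials and shifted Legendre polynomials.
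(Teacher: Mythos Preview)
Your approach is correct and rests on the same core idea as the paper's proof: pass to an $i$-dimensional truncation, recognize the finite Hilbert matrix $\mathcal{H}_i$, and invoke the classical bound $\|\mathcal{H}_i^{-1}\| \le c\,\exp(4i)$ (Todd, 1954) together with the min--max principle to get $\sigma_i(T)^2 \ge c'/(i^2\exp(4i))$.

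The difference is on which side the truncation is done. You work on the domain side, taking $V_i = \operatorname{span}(1,t,\dots,t^{i-1}) \subset L^2(0,1)$; this forces you to handle the non-orthonormality of the monomials, so the Gram matrix $H_i$ (itself the Hilbert matrix) enters and you end up bounding the generalized Rayleigh quotient $\min_{a^T H_i a = 1}\|D_i H_i\, a\|^2 = \lambda_{\min}(H_i^{1/2} D_i^2 H_i^{1/2})$. The paper instead works on the range side: since $TT^* = D\,\mathcal{H}\,D$ acts on $\ell^2$, the orthogonal projection $P_N$ onto the first $N$ coordinates commutes with $D$, so $P_N TT^* P_N = D_N \mathcal{H}_N D_N$ directly, and one immediately gets $\sigma_N(TT^*) \ge \lambda_{\min}(D_N \mathcal{H}_N D_N) \ge 1/(\|D_N^{-1}\|^2\|\mathcal{H}_N^{-1}\|)$. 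The two quantities $\lambda_{\min}(H_i^{1/2} D_i^2 H_i^{1/2})$ and $\lambda_{\min}(D_i H_i D_i)$ coincide (they are the eigenvalues of $AB$ versus $BA$ with $A = D_i H_i^{1/2}$, $B = A^T$), so the bounds agree exactly. The paper's route simply bypasses the Gram-matrix and Legendre-basis bookkeeping that you flag as the main obstacle; if you adopt the $\ell^2$-side truncation via $TT^*$, those complications disappear. Your note that $\lambda_{\min}(H_n)\asymp \sqrt{n}\,2^{-4n}$ is not quite consistent with the $\exp(-4N)$ used in the paper; the specific constant $2$ in $\exp(-2i)$ comes from Todd's bound $\|\mathcal{H}_N^{-1}\|\le c\,e^{4N}$, which is what you should cite.
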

\begin{proof}
Consider the operator $T T^* :\ell^2 \to \ell^2$. Since $\Ha  \Ha ^*$ is the infinite Hilbert matrix $\mathcal{H}$, we observe that
\[ T T^*  = D \mathcal{H} D\]
with $D$ the diagonal operator $D = {\rm diag}((\tfrac{1}{i})_i)$ from \eqref{eq:D}.
Let $P_N: \ell^2 \to \ell^2$ be the projection onto the
first $N$ components and let $\mathcal{H}_N = P_N \mathcal{H} P_N$ be the first $n\times n$-segment of the Hilbert matrix.
Then the estimate
\[ \sigma_N(P_N T T^* P_N) \leq
\|P_N\|^2 \sigma_N( T T^* ) =  \sigma_N( T T^* ) \]
holds.
On the other hand,  $P_N$ commutes with $D$. Now let $D_N = P_N D P_N $ be the $N\times N$-segment
of $D$, which means that $D_n = {\rm diag}((1/i)_{i=1,N})$.
Under such setting we consequently have
\[ P_N T T^* P_N =
D_N \mathcal{H}_N D_N. \]
It is well-known from \cite{Tod54} that there is a constant $c>0$ in the context of an estimate from above
for the norm of the inverse of the finite  Hilbert matrix $\mathcal{H}_N$ as
\[\|\mathcal{H}_N^{-1}\| \leq c \exp(4 N). \]
This gives
\begin{align*}
 \sigma_N(P_N T T^* P_N) &=
 \frac{1}{\|(D_N \mathcal{H}_N D_N)^{-1} \|_{\R^N\to \R^N}} =
 \frac{1}{\|D_N^{-1} \mathcal{H}_N^{-1}  D_N^{-1} \|_{\R^N\to \R^N} } \\
&\geq \frac{1}{ \|D_N^{-1}\|^2
\|  \mathcal{H}_N^{-1} \|_{\R^N\to \R^N} }
\geq \frac{1}{ N^2 c \exp( 4 N) }
\end{align*}
and yields the claimed result \eqref{eq:lowb}
by taking into account that $\sigma_N(T)^2 = \sigma_N(T T^*) $.
\end{proof}

\begin{rem}
{\rm
Table~\ref{tab1} gives an overview of
known estimates for the  singular values of
the composition of certain operators.
By inspecting the estimates \eqref{eq:pro1} as well as \eqref{eq:rough2} and \eqref{eq:lowb}, it is a really challenging question whether the compositions $\Ha  \, J$ and $D \, \Ha $ may lead to moderately ill-posed problems, although the character of the Hausdorff moment operator $\Ha $ seems to be severely ill-posed as the paper \cite{Rosenblum58} indicates.
If the answer is {\it yes}, then the moderate decay of the singular values of $J$ and $D$ has the power to stop in such compositions the severe ill-posedness character of $\Ha $ expressed by an exponential decay of the corresponding multiplier
function in the spectral decomposition of $\Ha   \Ha ^*$ (infinite Hilbert matrix).
\hfill\fbox{}}
\end{rem}

\begin{table}
\begin{center}
\begin{tabular}{ c||c|c|c }
 \diagbox{$\Tii$}{$\Ti$} & $\Ha $ & $C$ & $M$   \\ \hline
\rule{0mm}{2.7ex}$J$ & $e^{-c i} \lesssim \sigma_i \lesssim  i^{-\frac{3}{2}}$ &
$ \sigma_i \sim i^{-2}$  &
$ \sigma_i \sim i^{-1}$  \\  \hline
\rule{0mm}{2.7ex}$C^*$ &
$ i^{-1} e^{-2 i} \lesssim  \sigma_i \lesssim   i^{-\frac{3}{2}}$
 & 
\end{tabular}
\caption{Overview of known bounds for the singular values of compositions
of certain operators.}\label{tab1}
\end{center}
\end{table}

By the above example of  compactness of the composition
two non-compact operators, the following
issue is raised
that seems to be trivial only at first glimpse:
When $T$ is a non-compact operator
between Hilbert spaces, is the selfadjoint operator
$T^*T$ also non-compact? Clearly, $T$ is non-compact if and
only if $T^*$ is (by Schauder's theorem). However, as
we have seen, this does not necessarily imply non-compactness of
the composition. Using polar decomposition, the
following lemma can be shown:
\begin{lem}
Let $T: \X \to \Y$ be a bounded linear operator between
Hilbert spaces $\X,\Y$. Then
\begin{equation}
T \text{ is compact}
\Leftrightarrow
T^* \text{ is compact}
\Leftrightarrow
T^*T \text{ is compact}.
 \end{equation}
\end{lem}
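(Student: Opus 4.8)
The plan is to prove the equivalence in two halves, using the polar decomposition as the excerpt suggests. Write $T = U|T|$ where $|T| = (T^*T)^{1/2} \ge 0$ is the modulus of $T$ and $U$ is a partial isometry with initial space $\overline{\ran(|T|)} = \overline{\ran(T^*)}$ and final space $\overline{\ran(T)}$; then also $|T| = U^*T$. The first equivalence $T\text{ compact} \Leftrightarrow T^*\text{ compact}$ is Schauder's theorem and is already quoted in the text, so I would simply cite it. For the rest it suffices to prove $T$ compact $\Leftrightarrow$ $T^*T$ compact, and I would route this through the auxiliary claim that $T$ is compact if and only if $|T|$ is compact.

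First I would show $T$ compact $\Rightarrow$ $T^*T$ compact: the compact operators form a two-sided ideal in $\mathcal{L}(\X)$ (closed under composition with bounded operators on either side), so $T^*T$ is a product of the bounded operator $T^*$ with the compact operator $T$, hence compact. This direction is completely routine and needs no polar decomposition. For the converse, suppose $T^*T$ is compact. Then $|T| = (T^*T)^{1/2}$ is compact, because the square root of a compact positive operator is compact — one sees this from the spectral/singular-value representation $T^*T = \sum_i \lambda_i \langle \cdot, u_i\rangle u_i$ with $\lambda_i \to 0$, whence $|T| = \sum_i \sqrt{\lambda_i}\langle \cdot, u_i\rangle u_i$ with $\sqrt{\lambda_i}\to 0$, which is again compact (e.g. as a norm limit of finite-rank operators). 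Now $T = U|T|$ with $U$ bounded, so $T$ is the product of a bounded operator and a compact operator, hence compact. Combining with Schauder gives the full chain of equivalences.

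\begin{proof}
The equivalence $T\text{ compact} \Leftrightarrow T^*\text{ compact}$ is Schauder's theorem. It remains to prove $T\text{ compact} \Leftrightarrow T^*T\text{ compact}$.

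If $T$ is compact, then $T^*T$ is the composition of the bounded operator $T^*$ with the compact operator $T$; since the compact operators form a two-sided ideal, $T^*T$ is compact.

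Conversely, assume $T^*T$ is compact. As $T^*T$ is a compact, self-adjoint, positive operator, it admits a spectral decomposition $T^*T = \sum_{i} \lambda_i \langle \cdot,u_i\rangle_{\X}\, u_i$ with an orthonormal system $\{u_i\}$ and eigenvalues $\lambda_i \ge 0$ with $\lambda_i \to 0$. Then the modulus $|T| := (T^*T)^{1/2} = \sum_i \sqrt{\lambda_i}\,\langle \cdot,u_i\rangle_{\X}\, u_i$ is again compact, being the operator-norm limit of the finite-rank operators obtained by truncating the sum (the tail norm is $\sup_{i>N}\sqrt{\lambda_i} \to 0$). By the polar decomposition we may write $T = U\,|T|$ with a partial isometry $U \in \mathcal{L}(\X,\Y)$; hence $T$ is the composition of the bounded operator $U$ with the compact operator $|T|$ and is therefore compact. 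This establishes all stated equivalences.
\end{proof}

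The step I expect to be the only one worth any care is the passage from compactness of $T^*T$ to compactness of its square root $|T|$; everything else is the ideal property of compact operators together with Schauder's theorem. There is no real obstacle, since the square root of a compact positive operator inherits compactness immediately from the spectral representation, but it is the one place where a one-line justification is genuinely needed rather than a mere citation.
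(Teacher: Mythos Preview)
Your proof is correct and follows essentially the same route as the paper: Schauder's theorem for the first equivalence, the ideal property of compact operators for the easy direction, and for the nontrivial direction the observation that compactness of $T^*T$ passes to $|T|=(T^*T)^{1/2}$ via its spectral decomposition, after which the polar decomposition finishes the job. The only cosmetic difference is that the paper writes the polar decomposition in the form $T^* = \sqrt{T^*T}\,U$ to conclude compactness of $T^*$ directly, whereas you write $T = U|T|$ to conclude compactness of $T$; these are of course equivalent.
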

\begin{proof}
As mentioned, the first equivalence is Schauder's theorem
(see, e.g. \cite[Thm 4.19]{Rud91}),
and since compact operators form an ideal, we only
have to show that if $T^*T$ is compact, then
$T^*$ is compact. Compactness of $T^*T$
implies compactness of the square root $\sqrt{T^*T}$, as can be shown by a spectral decomposition. Now  the
polar decomposition $T^* = \sqrt{T^*T} U $,
e.g., \cite[Thm 12.35]{Rud91} or \cite[Thm~3.9]{Conway00},
with a bounded (unitary) operator  $U$ implies compactness
of $T^*$.
\end{proof}

\section{Numerical illustration of the decay of the singular values} \label{sec:numerics}
In order to illustrate the findings of Theorems~\ref{thm:main} and \ref{thm:lowerbounds},
we provide some numerical calculations of the singular values of $T = \Ha \, C^* = D \, \Ha $, 
and, for comparison, also for the operator in Proposition~\ref{pro:HM}, 
namely $\Talt = \Ha  \, J$. Both involve the Hausdorff operator and are composed with 
either the Ces\`{a}ro or the integration operator. 
The results for $\Talt$ are for comparison only as similar numerics
was performed in \cite{Gerthnum}. 
Let us spoil the story by noting that the conclusions from these 
numerical calculations do not give evidence of a certain decay of the 
singular values for the original operators $T$, $\Talt$ because of the 
exponential ill-posedness of the discretized operators and the corresponding 
huge margins of uncertainty left.

We discretized $T$ and $\Talt$ each in two ways:  

1.) The first one is obtained  by discretizing  from the left by using $P_N:\ell^2 \to \ell^2$ 
the projection onto the first $N$ components. Then, we numerically calculate 
the eigenvalues of $(P_N T)(P_N T)^* = P_N T T^* P_N$ and similar for $\Talt$. 
The resulting finite-dimensional matrix in both cases involves  discrete Hilbert-type matrices, 
for which an exponential decay of the singular values is known;
cf.~$\mathcal{H}_N$  and the statements in the proof of Theorem~\ref{thm:lowerbounds}. 
But, and this is the main point, 
this decay depends non-uniformly on the  discretization, which leads to the above-mentioned uncertainty. 

2.) The second discretization is done from the right by discretizing the integral in the 
Hausdorff operator by a midpoint rule. This can be seen as using the projector $Q_N:L^2(0,1) \to L^2(0,1)$ onto
piecewise constant functions. The corresponding matrix $(T Q_N)^* (T Q_N)$ has entries which 
can be calculated using the $\text{Li}_2$ polylogarithmus function. (Note that  more advanced quadrature 
rules do not lead to simple expressions for the corresponding matrices, which justifies out choice of 
the midpoint rule). 

For both discretizations, the  eigenvalues of the  matrices are then calculated  using Matlab's eig-function.  
\begin{figure}
\includegraphics[width=0.47\textwidth]{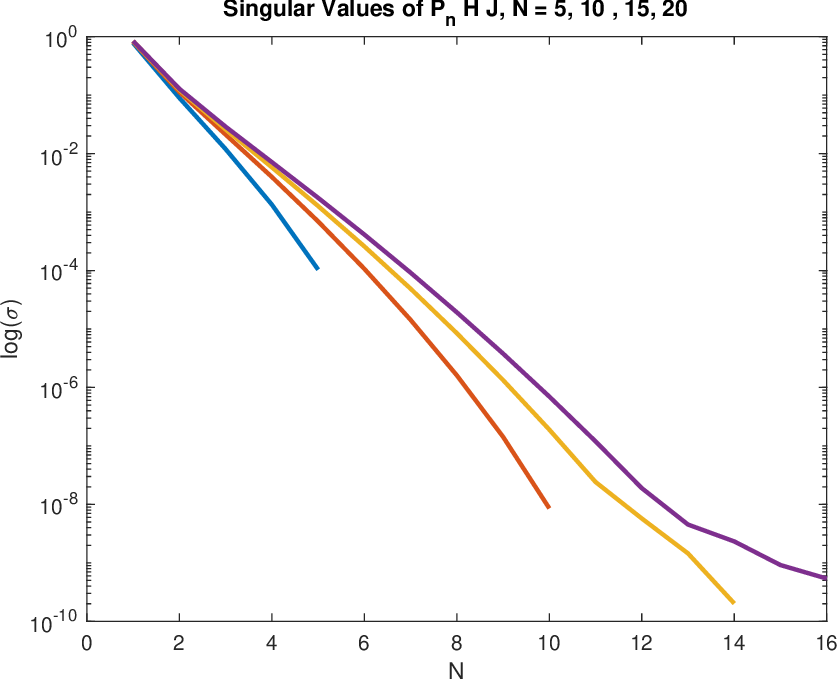}\hfill  
\includegraphics[width=0.47\textwidth]{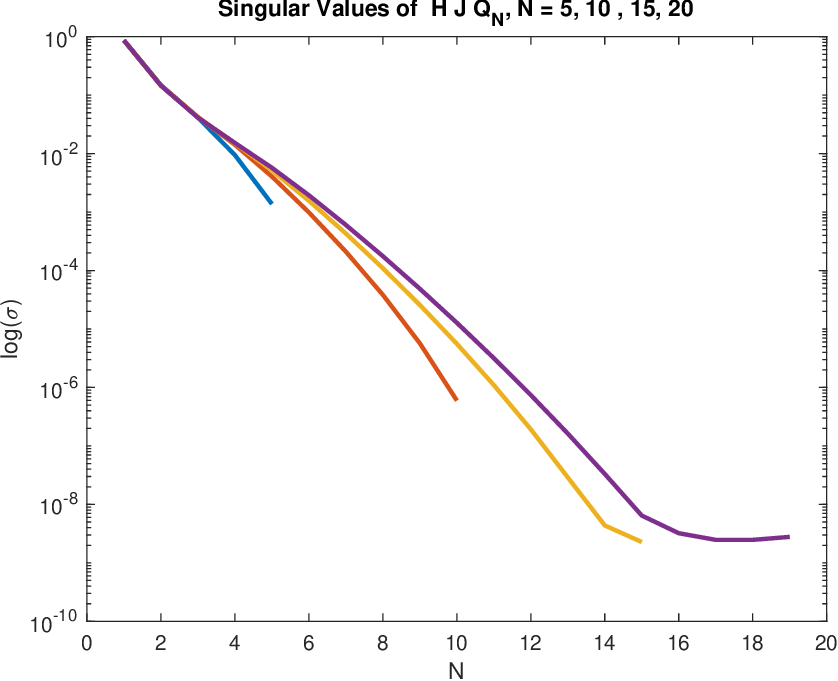} 
\\[1ex]
\includegraphics[width=0.47\textwidth]{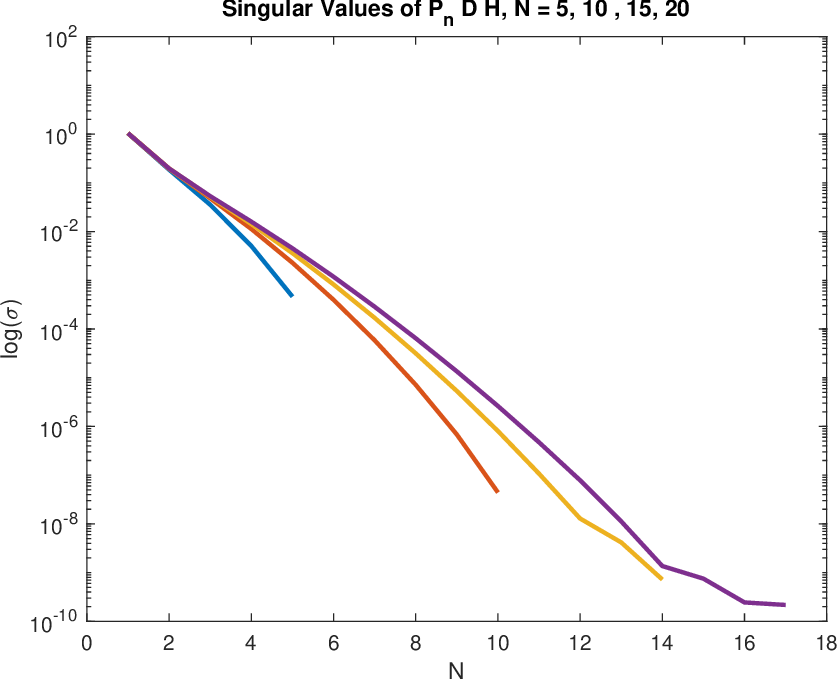}
\hfill 
\includegraphics[width=0.47\textwidth]{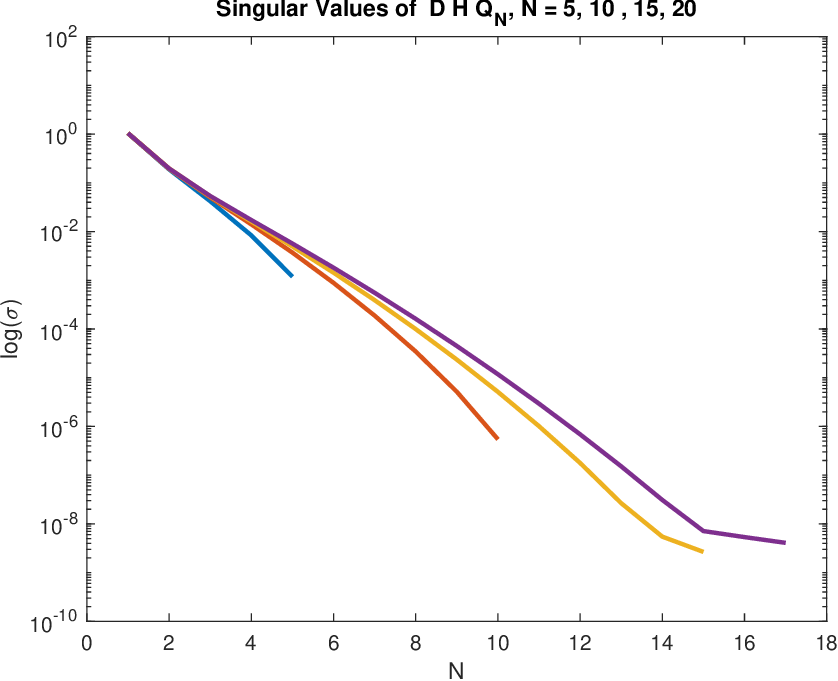}
\caption{Plot of $\log(\sigma)$ for the two discretizations of the operators $\Ha  \, J$ (top row) 
and $D \, \Ha = \Ha\, C^*$ (bottom row). Left: discretization by the left using $P_N$. Right: discretization 
from the right using $Q_N$}\label{firstfig}
\end{figure}
The results are illustrated in Figure~\ref{firstfig}. The figures provide a plot of the 
logarithm of the singular values for different discretization levels $N = 5, 10,15,20$. 
The top row corresponds to the operator $\Talt = \Ha  \, J$ 
(as considered in \cite{Gerthnum}) and the bottom row to $ T = D\, \Ha $, and the 
left column corresponds to a discretization by $P_N$ from the left and the right column to 
a discretization  by $Q_N$ from the right.

The two different discretizations give qualitatively similar results. 
Since an exponential decay in this  semi-logarithmic plot corresponds to a linear decay, 
we find that in any case the discretized operators show exponential decay. 
However, the asympotics come with a huge error margin such that 
no conclusion about the decay for the original operators can be drawn.

\begin{figure}
\includegraphics[width=0.47\textwidth]{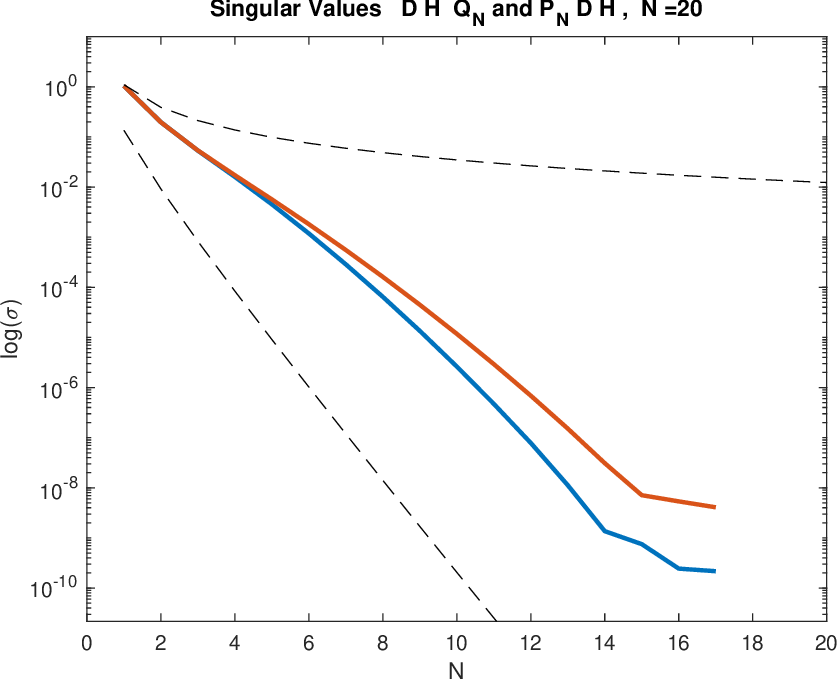}\hfill  
\includegraphics[width=0.47\textwidth]{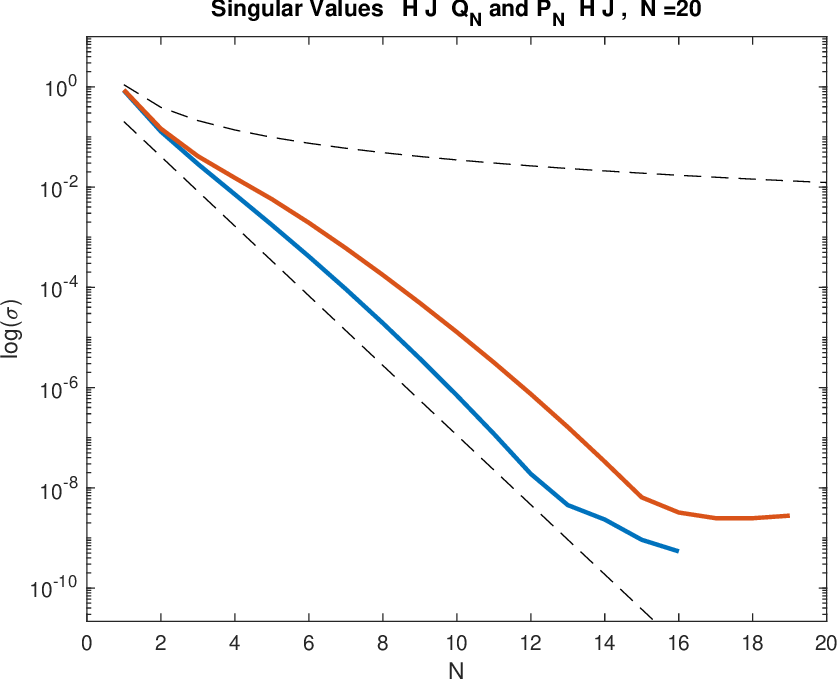} 
\caption{Plot of $\log(\sigma)$ for the two discretizations of the operators $D \, \Ha = \Ha\, C^*$ (left) and 
$\Ha  \, J$ (right) for $N = 20$. Dashed lines: Upper and lower theoretical estimates, $i^{-3/2}$ and 
$\exp(-c i)$ with $c = 1.6$ for $\Ha  \, J$ (right) and $i^{-3/2}$ and $\exp(-2 i)/i$ for $D \, \Ha $ (left).
}
\label{secondfig}
\end{figure}
The next figure, Figure~\ref{secondfig} further illustrates this issue: 
her we simultaneously plot the log of the singular values for the two discretization procedures 
for $N = 20$ (for each operator separately). 
The dashed lines correspond to the theoretical upper and lower bounds from the analysis above.

On the left we illustrate the results for $D \, \Ha $ and on the 
right for $\Ha  \, J$. Note that the lower bound  in Theorem~\ref{thm:lowerbounds}  for 
$D \, \Ha $ seem to be too pessimistic, and 
one might conjecture that a lower bound  $\exp(-c i)$ as for $\Ha  \, J$ holds.
What can be observed is that the singular values  $\sigma_i$ of the different
discretization methods 
agree only for indices $i =1\ldots,4$. In other words every singular value  
higher than the 4th one is here discretization-dependent and does not give any clue 
about the true values of the original operator. In particular, the question about 
the degree of ill-posedness (moderate or severely) 
is still open and cannot be answered by such numerical calculations.

\section{When is the composition $\Ti \, \Tii$ compact for non-compact operators  $\Ti$ and $\Tii$?}\label{sec:ncnc}
In this section, we would further investigate the above curious observation 
of a compact product of non-compact operators. 
There are some studies of this phenomenon in operator theory. 
For instance, there  is the notion of a {\em power compact operator} \cite{Barria80,Tacon79}, that is, an operator $T$ (not necessarily 
compact), where some power $T^k$ is a compact operator.  In these references, a
characterization of power-compactness was given.
In \cite[pp~579--580]{DS58}, it is remarked that 
such operators have similar spectral properties as compact operators and that weakly compact 
operators in the space of continuous functions are instances of such power compact operators. 
All these are examples of compact products  of non-compact operators.

Related to this are polynomially compact operators, where $p(T)$ is compact for some 
polynomial~\cite{Olsen71}. Also in this reference, there is even a characterization when the product of
two operators is compact; see Theorem~2.3 in \cite{Olsen71}:

\begin{thm}
Let $\Ti,\Tii$ be two bounded operators on a separable Hilbert space.  Then the product $\Ti \Tii$ is compact 
if and only if there is a projection $E$ such that $\Ti E$ and $(I-E)\Tii$  are both compact. 
\end{thm}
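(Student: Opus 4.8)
\emph{Proof plan.} The implication ``$\Leftarrow$'' is immediate: if $E$ is a projection with $\Ti E$ and $(I-E)\Tii$ compact, then
$$\Ti\Tii=(\Ti E)\,\Tii+\Ti\,\big((I-E)\Tii\big)$$
is the sum of a compact operator followed by a bounded one and of a bounded operator followed by a compact one, hence compact, since the compact operators form a two-sided ideal.

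For ``$\Rightarrow$'', assume $\Ti\Tii$ is compact. I would construct $E$ by a dyadic truncation of the spectral projections of $\Tii\Tii^*$. First I reformulate everything through the bounded positive operators $P:=\Ti^*\Ti$ and $Q:=\Tii\Tii^*$. By the ideal property, $(\Ti^*\Ti)\Tii=\Ti^*(\Ti\Tii)$ is compact, hence so is $f(\Ti^*\Ti)\Tii$ for every continuous $f$ with $f(0)=0$ (approximate $f$ uniformly on the spectrum by polynomials without constant term); with $f(t)=\sqrt{t}$ this gives $P^{1/2}\Tii$ compact, and multiplying on the right by $\Tii^*$ and repeating the functional-calculus argument on $Q$ yields that $P^{1/2}Q^{1/2}$ is compact. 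On the other hand, the Lemma above (applied also to adjoints) gives: $\Ti E$ is compact $\iff(\Ti E)^*(\Ti E)=E\,P\,E$ is compact $\iff P^{1/2}E$ is compact; and $(I-E)\Tii$ is compact $\iff(I-E)\,Q\,(I-E)$ is compact $\iff Q^{1/2}(I-E)$ is compact. So it suffices to exhibit a projection $E$ with $P^{1/2}E$ and $Q^{1/2}(I-E)$ compact, knowing only that $P,Q\ge0$ are bounded and $P^{1/2}Q^{1/2}$ is compact.

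For the construction, after rescaling assume $\|Q\|<1$, and let $\pi_n$ be the spectral projection of $Q$ for the interval $[2^{-n-1},2^{-n})$, $n\ge0$; the $\pi_n$ are mutually orthogonal and $\sum_{n\ge0}\pi_n=I-P_{\ker Q}$. On $\ran\pi_n$ the operator $Q^{1/2}$ is bounded below by $2^{-(n+1)/2}$, so $Q^{1/2}\pi_n$ has a bounded one-sided inverse $R_n$ with $\|R_n\|\le2^{(n+1)/2}$ and $Q^{1/2}R_n=\pi_n$; hence $P^{1/2}\pi_n=(P^{1/2}Q^{1/2})\,R_n$ is compact. Let $\pi_n'\le\pi_n$ be the (finite-rank) spectral projection of the positive compact operator $|P^{1/2}\pi_n|$ for $(2^{-n},\infty)$, so that $\|P^{1/2}(\pi_n-\pi_n')\|\le2^{-n}$, and set
$$E:=\sum_{n\ge0}(\pi_n-\pi_n')$$
a strong sum of mutually orthogonal projections, hence a projection. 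Then $P^{1/2}E=\sum_{n\ge0}P^{1/2}(\pi_n-\pi_n')$ is a norm-convergent series of compact operators ($n$-th term of norm $\le2^{-n}$), so $P^{1/2}E$ is compact; and $I-E=P_{\ker Q}+\sum_{n\ge0}\pi_n'$ gives $Q^{1/2}(I-E)=\sum_{n\ge0}Q^{1/2}\pi_n'$, a norm-convergent series of finite-rank operators ($n$-th term of norm $\le2^{-n/2}$), so $Q^{1/2}(I-E)$ is compact. By the equivalences above, $\Ti E$ and $(I-E)\Tii$ are compact, which finishes the proof.

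The only genuine obstacle is the choice of $E$. The obvious candidate $E=P_{\overline{\ran\Tii}}$ makes $(I-E)\Tii=0$ but usually fails to make $\Ti E$ compact (take $\Ti=I$ and $\Tii$ compact with dense range, so $\Ti E=I$), while a finite-rank $E$ fails on the $\Tii$-side; the remedy is to move into $I-E$, from each dyadic spectral block of $Q$, exactly the finite-dimensional subspace on which $P^{1/2}$ is ``large'', which turns $Q^{1/2}(I-E)$ into a norm-summable series of finite-rank operators and, simultaneously, $P^{1/2}E$ into a norm-summable series of compacts. The remaining ingredients---the two functional-calculus reductions, the one-sided inverses $R_n$, and the stability of compactness under operator-norm limits and finite sums---are routine.
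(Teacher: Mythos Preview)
The paper does not supply its own proof of this statement; it is quoted verbatim as Theorem~2.3 of Olsen~\cite{Olsen71} and left unproven. So there is nothing to compare against---your argument is a genuine addition rather than an alternative to something in the text.

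Your proof is correct. The ``$\Leftarrow$'' direction is indeed just the ideal property. For ``$\Rightarrow$'', the two reductions are valid: the functional-calculus step (approximating $\sqrt{t}$ uniformly on $[0,\|P\|]$ by polynomials without constant term) legitimately passes from ``$PB$ compact'' to ``$P^{1/2}B$ compact'', and then to ``$P^{1/2}Q^{1/2}$ compact''; and the equivalences $\Ti E$ compact $\Leftrightarrow P^{1/2}E$ compact, $(I-E)\Tii$ compact $\Leftrightarrow Q^{1/2}(I-E)$ compact follow from the lemma $T$ compact $\Leftrightarrow T^*T$ compact applied twice. The dyadic construction is the heart of the matter and works as you claim: each $P^{1/2}\pi_n$ is compact because $Q^{1/2}$ is boundedly invertible on $\ran\pi_n$, the trimmed projections $\pi_n-\pi_n'$ are mutually orthogonal, and the geometric bounds $\|P^{1/2}(\pi_n-\pi_n')\|\le 2^{-n}$ and $\|Q^{1/2}\pi_n'\|\le 2^{-n/2}$ make both series absolutely norm-convergent, hence with compact sum.

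Olsen's original argument proceeds via the Calkin algebra and a lifting-of-projections technique; your spectral-theoretic construction is more hands-on and stays entirely within $B(\mathcal H)$, which fits well with the elementary Hilbert-space flavour of Section~\ref{sec:ncnc}. If you want to tighten the write-up, you could make explicit that the SOT-sum defining $E$ and the norm-sum defining $P^{1/2}E$ agree because $P^{1/2}$ is bounded and norm limits are also SOT limits---this is implicit but worth one line.
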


The question when the composition $\Ti\Tii$ is compact for non-compact $\Ti$ and $\Tii$ can also be stated in terms of abstract 
operator theory. Recall that the set of compact operators $K: \X \to \X$ constitute an ideal $I(\X)$ in 
the algebra of bounded operators $B(\X)$. Thus, one can construct the quotient algebra $B(\X)/I(\X)$, called the Calkin algebra. The case that $\Ti\Tii$ is compact (and hence is an element in $I(\X)$) for 
$\Ti,\Tii$ non-compact means in the Calkin algebra that $[\Ti][\Tii] = 0$, for $[\Ti]$, $[\Tii]$ $\not = 0$, 
where $[.]$ denotes equivalence classes. In other words,  $[\Ti]$ 
(and $[\Tii]$ as well) is a zero divisor.

The zero divisors of the Calkin algebra have been characterized by  Pfaffenberger in \cite{Pfaff70}
as the complement of the semi-Fredholm operators; cf.~\cite[Corollary 3.7]{Pfaff70}, 
where semi-Fredholm means closed range together with finite-dimensional nullspace or 
finite-dimensional codimension of the range.  Essentially this means that 
for all type I ill-posed operators, we can find a non-compact operator such that 
their product is compact (for type II operators this is trivial). 

Hence, the identity ``$\text{Non-compact} \circ \text{Non-compact} = \text{Compact}$''
is almost a characterization of type~I ill-posedness. For a better understanding of these results, 
we will detail and prove (following the lines of \cite{Pfaff70} and the references therein) 
the corresponding results below for our simplified case of Hilbert space operators.  

In the following we assume throughout that  $\X,\Y$ are Hilbert spaces.
Let us start with a simple well-known lemma:
\begin{lem}\label{lemma1}
Let $\Ti: \X \to \Y$ be  a bounded operator with non-closed range. 
Then for any $\epsilon >0$ there exists a $x \in N(\Ti)^\bot$ such that 
\[ \|\Ti x\| \leq \epsilon \|x\|.\]
\end{lem}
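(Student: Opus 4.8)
The plan is to argue by contradiction from the negation of the conclusion. Suppose that for some $\epsilon > 0$ there is no $x \in N(\Ti)^\bot$ with $\|\Ti x\| \leq \epsilon \|x\|$; equivalently, $\|\Ti x\| \geq \epsilon \|x\|$ for all $x \in N(\Ti)^\bot$. I would then show that this forces $\range(\Ti)$ to be closed, contradicting the hypothesis. The key observation is that $\Ti$ restricted to the closed subspace $N(\Ti)^\bot$ is injective and bounded below by $\epsilon$, and $\range(\Ti|_{N(\Ti)^\bot}) = \range(\Ti)$.

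First I would fix notation: write $M := N(\Ti)^\bot$, a closed subspace of $\X$, and note that since $\X = N(\Ti) \oplus M$ orthogonally, every $x \in \X$ decomposes as $x = x_0 + x_1$ with $x_0 \in N(\Ti)$, $x_1 \in M$, whence $\Ti x = \Ti x_1$; thus $\range(\Ti) = \Ti(M)$. Next, assuming the lower bound $\|\Ti x\| \geq \epsilon \|x\|$ for all $x \in M$, I would take an arbitrary sequence $(y_n) \subset \range(\Ti)$ with $y_n \to y \in \Y$ and produce $x_n \in M$ with $\Ti x_n = y_n$. The lower bound gives $\|x_n - x_m\| \leq \epsilon^{-1}\|\Ti(x_n - x_m)\| = \epsilon^{-1}\|y_n - y_m\|$, so $(x_n)$ is Cauchy in the complete space $M$, hence $x_n \to x \in M$. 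By continuity of $\Ti$, $y_n = \Ti x_n \to \Ti x$, so $y = \Ti x \in \range(\Ti)$. Therefore $\range(\Ti)$ is closed, contradicting the assumption.

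I do not expect any serious obstacle here; this is the standard fact that a bounded operator has closed range if and only if it is bounded below on the orthogonal complement of its kernel (equivalently, on any complement). The only point requiring a modicum of care is the reduction from "bounded below on $M$" to the completeness argument — one must use that $M$, being a closed subspace of a Hilbert space, is itself complete, and that the $x_n$ can indeed be chosen in $M$ (by projecting any preimage onto $M$, using that the component in $N(\Ti)$ is annihilated by $\Ti$). One could alternatively phrase the whole thing contrapositively without sequences, invoking the open mapping theorem applied to $\Ti: M \to \range(\Ti)$, but the sequential argument is the most elementary and self-contained, and it is the natural route given that the lemma is stated with an explicit $\epsilon$.
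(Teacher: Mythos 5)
Your proof is correct. It follows the same overall contradiction scheme as the paper: negate the conclusion to obtain the lower bound $\|\Ti x\| \ge \epsilon\|x\|$ on $N(\Ti)^\bot$, and deduce that $\range(\Ti)$ would then be closed. Where you differ is in how that deduction is carried out. The paper observes that the lower bound makes the Moore--Penrose inverse $\Ti^\dagger$ bounded on $\range(\Ti)$, hence extendable to $\overline{\range(\Ti)}$, and then invokes Nashed's characterization of ill-posedness (bounded pseudo-inverse if and only if closed range) to reach the contradiction. You instead prove the closed-range implication from scratch: restrict to $M = N(\Ti)^\bot$, note $\range(\Ti) = \Ti(M)$, and run the standard Cauchy-sequence argument using completeness of the closed subspace $M$. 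Your version is more elementary and self-contained, requiring no external citation; the paper's version is shorter on the page but leans on the regularization-theory literature (\cite{EHN96}, \cite{Nashed86}) for exactly the equivalence you prove by hand. Both are sound, and your care in choosing the preimages $x_n$ inside $M$ (so that the lower bound applies) is precisely the point that needs attention in either formulation.
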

\begin{proof}
If the conclusion would not hold, then we had an $\epsilon$ such that 
for all $x\in N(\Ti)^\bot$, with \mbox{$\| \Ti x\| \geq \epsilon \|x\|$}. This means that 
the pseudo-inverse restricted to $\range(\Ti)$, \mbox{$\Ti^\dagger:\range(\Ti) \to N(\Ti)^\bot$} satisfies 
\[\|\Ti^\dagger y\| \leq \epsilon^{-1} \|\Ti \Ti^\dagger y\| = \epsilon^{-1} \|Q_{\range} y\| \leq \epsilon^{-1} \]
and hence is bounded, where $Q_{\range}$ denotes the orthogonal projection onto $\overline{\range(\Ti)}$ (see the 
Moore-Penrose inverse, \cite[Prop. 2.3]{EHN96}.
Thus, $\Ti^\dagger$ has a bounded extension to $\overline{\range(\Ti)}$, which contradicts the 
assumption of a non-closed range of $\Ti$ via Nashed's characterization of ill-posedness \cite{Nashed86}. 
\end{proof}

Next, we follow a construction of \cite{GMF}. 
\begin{pro}\label{pro2}
Let $\Ti:\X\to \Y$  have non-closed range. Then there exists an
infinite-dimensional closed subspace $\tilde{\X} \subset \X$ such that $\Ti$ restricted to $\tilde{\X}$ is compact:
\[ \Ti|_{\tilde{\X}} = K, \]
where $K:{\tilde{\X}} \to \Y$ is compact. 
\end{pro}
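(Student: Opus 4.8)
The plan is to build the infinite-dimensional subspace $\tilde{\X}$ by an iterative construction, producing an orthonormal sequence $(x_n)_{n=1}^\infty$ in $\X$ whose images $\Ti x_n$ tend to zero fast enough that $\Ti$ restricted to $\overline{\operatorname{span}}(x_n)$ is compact. First I would invoke Lemma~\ref{lemma1} repeatedly. At step $n$, having already chosen orthonormal $x_1,\dots,x_{n-1}$, I consider the operator $\Ti$ restricted to the closed subspace $V_{n-1} := \{x_1,\dots,x_{n-1}\}^\bot$. One must check that $\Ti|_{V_{n-1}}$ still has non-closed range — this is where a little care is needed, but it follows because $V_{n-1}$ has finite codimension, and restricting a bounded operator to a finite-codimensional closed subspace cannot turn a non-closed range into a closed one (if $\Ti|_{V_{n-1}}$ had closed range, then $\range(\Ti) = \Ti(V_{n-1}) + \Ti(V_{n-1}^\bot)$ would be a sum of a closed subspace and a finite-dimensional one, hence closed, a contradiction). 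Then Lemma~\ref{lemma1} applied to $\Ti|_{V_{n-1}}$ yields a unit vector $x_n \in V_{n-1}$, which we may additionally take in $N(\Ti|_{V_{n-1}})^\bot$, with $\|\Ti x_n\| \leq 2^{-n}$.

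Next I would set $\tilde{\X} := \overline{\operatorname{span}}\{x_n : n \in \N\}$, which is by construction an infinite-dimensional closed subspace of $\X$ since the $x_n$ are orthonormal. It remains to show that $K := \Ti|_{\tilde{\X}}$ is compact. Any $x \in \tilde{\X}$ has the expansion $x = \sum_n \langle x, x_n\rangle x_n$ with $\sum_n |\langle x, x_n\rangle|^2 = \|x\|^2$, so $Kx = \sum_n \langle x, x_n\rangle \Ti x_n$. Define finite-rank operators $K_N x := \sum_{n=1}^N \langle x, x_n\rangle \Ti x_n$. Then for $\|x\|\le 1$,
\[
\|Kx - K_N x\| \;\le\; \sum_{n=N+1}^\infty |\langle x, x_n\rangle|\, \|\Ti x_n\| \;\le\; \Big(\sum_{n=N+1}^\infty |\langle x, x_n\rangle|^2\Big)^{1/2} \Big(\sum_{n=N+1}^\infty \|\Ti x_n\|^2\Big)^{1/2} \;\le\; \Big(\sum_{n=N+1}^\infty 4^{-n}\Big)^{1/2},
\]
by Cauchy-Schwarz. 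The right-hand side tends to $0$ as $N \to \infty$ independently of $x$, so $K_N \to K$ in operator norm, and hence $K$ is compact as a norm-limit of finite-rank operators.

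The main obstacle I anticipate is the bookkeeping in the inductive step, specifically the verification that $\Ti$ restricted to each finite-codimensional subspace $V_{n-1}$ retains a non-closed range so that Lemma~\ref{lemma1} keeps applying; everything else is a routine $\ell^2$ summability estimate. An alternative route that sidesteps part of this would be to avoid restricting the domain and instead extract a suitable sequence directly: by non-closedness of $\range(\Ti)$ one can find unit vectors $z_n$ with $\|\Ti z_n\| \to 0$, and since a bounded sequence in a Hilbert space has a weakly convergent subsequence with — after passing to a further subsequence and using Gram-Schmidt — an "almost orthonormal" structure, one can orthonormalize to get the desired $(x_n)$ while keeping $\|\Ti x_n\|$ small; but the clean finite-codimension argument above via Lemma~\ref{lemma1} is more transparent and is presumably the route taken in \cite{GMF}.
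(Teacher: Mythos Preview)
Your proof is correct and follows essentially the same route as the paper's: both construct an orthonormal sequence $(x_n)$ with $\|\Ti x_n\| \le 2^{-n}$ by repeatedly applying Lemma~\ref{lemma1} to the restriction of $\Ti$ to a finite-codimensional subspace (using that such a restriction retains non-closed range), and then show compactness of $\Ti|_{\tilde{\X}}$ as a norm-limit of finite-rank operators. The only cosmetic differences are that the paper first passes to $N(\Ti)^\bot$ before starting the induction and uses the cruder bound $|(x,x_i)| \le \|x\|$ rather than your Cauchy--Schwarz estimate in the final approximation step.
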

\begin{proof}
Let us define $\Z:=N(\Ti)^\bot$ 
and the operator $\tTi: \Z\to \Y$ 
such that $\tTi =  \Ti|_{N(\Ti)^\bot}$.
Since the ranges of $\Ti$ and $\tTi$ agree, $\tTi$ has non-closed range as well. 
Note that $N(\tTi) = 0$. 

We inductively construct $\tilde{\X}$ through
 an orthonormal basis $(x_n)_{n=1}^\infty$ in $\Z$
such that \[ \|\tTi x_n\| \leq \frac{1}{2^n}. \]

Let us start with $n = 1$:
Let $x_1$ be such that $\|x_1\| = 1$ and $\|\tTi x_1\| \leq \frac{1}{2}$.
By  Lemma~\ref{lemma1} such an $x_1$ exists. 
We proceed by the induction step.
Assume that we have defined an orthonormal basis
$x_{1},\ldots x_n$  in $\Z$ such that 
\[ \|\tTi x_i\| \leq \frac{1}{2^i}, \qquad i=1,\ldots n,\]
and denote by $\X_n$ the span of the $(x_i)_{i=1}^n$, by 
$P_n$ the orthogonal projector onto $\X_n$, and by 
$Q_n = I -P_n$ the orthogonal projector onto $\X_n^\bot$ with complements taken
with respect to $\Z$.  

Note that $\X_n^\bot$ is an 
infinite-dimensional space.
Because otherwise, if it were finite-dimensional with 
$\X_n^\bot = \operatorname{span}\{z_i,i=1,k\} =: \W$, 
then 
$\W + \X_n = \Z$, and thus the image of $\Z$ under $\tTi$  is 
the span of $\Ti x_i$, $\Ti z_j$  $i=1,\ldots, n$, $j=1,\ldots,k$ and hence finite-dimensional, 
 contradicting   the assumption for $\Ti$. Thus $Q_n$ is a nonzero projection onto an 
infinite-dimensional subspace.

Note that we have $\tTi = \tTi P_n + \tTi Q_n$. Since the sum of two 
closed subspaces is closed if one of them is finite-dimensional, 
we note that, as $\tTi P_n$ is finite-dimensional, 
the range of  $\tTi Q_n$  cannot be closed as otherwise the range of $\tTi$ would be closed as well. 

By  Lemma~\ref{lemma1} applied to $\tTi Q_n$, we thus find a 
normalized $y_{n+1}$ with $\|\tTi Q_n y_{n+1} \| \leq   \frac{1}{2^{n+1}} \|y_{n+1}\|$ 
and $y_{n+1}\in N(\tTi Q_n)^\bot$.
Since $N(\tTi Q_n)^\bot = \overline{R(Q_n \tTi^*)}$, it follows that $y_{n+1}  \in \X_n^\bot$ and thus 
$ \|y_{n+1}\| = \|Q_n y_{n+1}\|$. 
Take $x_{n+1} = \frac{Q_n y_{n+1}}{\|Q_n y_{n+1}\|}$.
Then $x_{1},\ldots, x_{n+1}$ is orthonormalized and 
\[ \|\tTi  x_{n+1}\| \leq  \frac{1}{2^{n+1}} \frac{\|y_{n+1}\|}{{\|Q_n y_{n+1}\|}} 
 =  \frac{1}{2^{n+1}} \frac{\|Q_n y_{n+1}\|}{{\|Q_n y_{n+1}\|}}  = \frac{1}{2^{n+1}}.
\]

Now take the operator  $K_n:= \tTi P_n$. Since $\range{(P_n)} \subset N(\Ti)^\bot$
we also have $K_n = \Ti P_n$. This operator can be written with help of the 
orthonormal basis $x_i$ as 
\[ K_n x = \sum_{i=1}^n  \Ti x_i (x,x_i)_{\X} \]
and obviously has finite-dimensional range. 
Then, 
\[ \|(K_n - K_m) x\| = \|\sum_{i=n}^m  \Ti x_i (x,x_i)_{\X}\| \leq 
 \sum_{i=n}^m \|\Ti x_i\| \|x\| \leq  
  \sum_{i=n}^m  \frac{1}{2^i}\|x\| \leq  \frac{1}{2^{n}} \|x\|.
\]
Thus $K_n$ is a Cauchy-sequence of finite-range  operators which therefore converge 
to a compact limit operator $K_n \to K$. By construction, we have 
that $K x_i = \Ti x_i$ and thus $\tilde{\X} = \operatorname{span}\{x_i\}$ and $K$ provides 
the subspace and compact operator that we were looking for. 
\end{proof} 

As mentioned above, we have adapted the proof of the proposition from  \cite[Thm.~4.1, p.72]{GMF}, where 
it was done in a more general Banach-space setting with biorthogonal sequences. The similar construction 
was used in \cite{Schech}, the result of which was employed in the proof of \cite{Pfaff70}.

We now state the first main result of this section:
\begin{thm}\label{main1}
Let $\Ti:\X \to \Y$ be a bounded operator. 

\begin{enumerate} 
\item\label{one} Assume that $\Ti$ has a non-closed range. Then there exists a non-compact operator $\Tii: \X\to \X$ such 
that $\Ti\Tii$ is compact. 

\item\label{two} Conversely, suppose that $N(\Ti)$ is finite-dimensional and assume that 
there exists a non-compact $\Tii:\X\to \X$ such that $\Ti \Tii$ is compact. 
Then $\Ti$ has a non-closed range. 
\end{enumerate}
\end{thm}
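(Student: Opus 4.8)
\textbf{Proof plan for Theorem~\ref{main1}.}

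For part~\ref{one}, the plan is to use Proposition~\ref{pro2} directly. Applying that proposition to $\Ti$, we obtain an infinite-dimensional closed subspace $\tilde{\X}\subset\X$ such that $\Ti|_{\tilde{\X}}$ is compact. Let $P$ denote the orthogonal projection of $\X$ onto $\tilde{\X}$, and set $\Tii:=P$. This $\Tii$ is non-compact because $\tilde{\X}$ is infinite-dimensional (a projection onto an infinite-dimensional subspace cannot be compact, since its restriction to that subspace is the identity, whose unit ball is non-precompact). On the other hand, the range of $\Tii$ is exactly $\tilde{\X}$, and $\Ti\Tii x = \Ti(Px) = (\Ti|_{\tilde{\X}})(Px)$; since $\Ti|_{\tilde{\X}}$ is compact and $P$ is bounded, the composition $\Ti\Tii$ is compact. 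This is short and the only subtlety is to phrase carefully why a projection onto an infinite-dimensional subspace fails to be compact.

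For part~\ref{two}, I argue by contraposition: assume $\Ti$ has \emph{closed} range and that $N(\Ti)$ is finite-dimensional, and show that for every bounded $\Tii$ with $\Ti\Tii$ compact, $\Tii$ must be compact. The key structural fact is that under these hypotheses $\Ti$ is bounded below on $N(\Ti)^\bot$: since $\range(\Ti)$ is closed, the restriction $\tTi:=\Ti|_{N(\Ti)^\bot}$ is a bounded bijection onto the Hilbert space $\overline{\range(\Ti)}=\range(\Ti)$, so by the bounded inverse theorem there is $m>0$ with $\|\Ti z\|\ge m\|z\|$ for all $z\in N(\Ti)^\bot$. Now decompose $\Tii = P_{N(\Ti)^\bot}\Tii + P_{N(\Ti)}\Tii$, where $P_{N(\Ti)}$ is the orthogonal projection onto the finite-dimensional space $N(\Ti)$. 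The second summand $P_{N(\Ti)}\Tii$ has finite-dimensional range, hence is compact, so it suffices to show $P_{N(\Ti)^\bot}\Tii$ is compact. For that, observe that for any $x$, $\|\Ti\Tii x\| = \|\Ti(P_{N(\Ti)^\bot}\Tii x)\| \ge m\|P_{N(\Ti)^\bot}\Tii x\|$, i.e. $\|P_{N(\Ti)^\bot}\Tii x\|\le m^{-1}\|\Ti\Tii x\|$. Thus if $(x_k)$ is a bounded sequence, compactness of $\Ti\Tii$ gives a subsequence along which $\Ti\Tii x_k$ converges, hence is Cauchy, and the inequality forces $P_{N(\Ti)^\bot}\Tii x_k$ to be Cauchy as well, so it converges. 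Therefore $P_{N(\Ti)^\bot}\Tii$ maps bounded sequences to sequences with convergent subsequences, i.e. it is compact. Consequently $\Tii$ is compact, contradicting the hypothesis that a non-compact such $\Tii$ exists; hence $\Ti$ cannot have closed range.

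The main obstacle, such as it is, lies in part~\ref{two}: one must use \emph{both} hypotheses (closed range \emph{and} finite-dimensional nullspace) in the right way. Finite-dimensionality of $N(\Ti)$ is what lets us peel off the potentially troublesome component $P_{N(\Ti)}\Tii$ as automatically compact; closedness of the range is what makes $\Ti$ bounded below on the orthogonal complement, which is the lever that transfers compactness of $\Ti\Tii$ back to $\Tii$. Without the finite-dimensional nullspace assumption the statement would genuinely fail — taking $\Ti$ to be a non-compact orthogonal projection with infinite-dimensional kernel and $\Tii$ the complementary projection gives $\Ti\Tii=0$ compact with both factors non-compact and $\Ti$ of closed range — so it is worth remarking that this is exactly the gap between ``closed range'' and ``semi-Fredholm'' alluded to earlier via Pfaffenberger's characterization.
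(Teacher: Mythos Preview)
Your proof is correct and follows essentially the same approach as the paper. For part~\ref{one} the argument is identical; for part~\ref{two} the paper uses the bounded Moore--Penrose inverse $\Ti^\dagger$ (available since $\range(\Ti)$ is closed) to write $Q_{N(\Ti)^\bot}\Tii = \Ti^\dagger(\Ti\Tii)$ and conclude compactness from the ideal property, whereas you use the equivalent bounded-below estimate on $N(\Ti)^\bot$ together with a sequential argument --- the same idea in slightly more elementary packaging.
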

\begin{proof}
Ad \ref{one}:
By taking $\Tii$ as the projector onto the infinite-dimensional space $\tilde{\X}$ from Proposition~\ref{pro2}, 
we have that $\Ti\Tii = K$ is compact. Since $P$ projects onto an infinite-dimensional space, it cannot be
compact. 

Ad \ref{two}: Assume that $\Ti$ has a closed range. Then, by classical results in 
regularization theory~\cite{EHN96}, there exists a bounded pseudo-inverse $\Ti^\dagger$. 
The Moore-Penrose equations give $\Ti^\dagger \Ti  = Q_{N(\Ti)^\bot}$, where  
$Q_{N(\Ti)^\bot}$ is the 
orthogonal projector onto $N(\Ti)^\bot$. Let $P = I - Q_{N(\Ti)^\bot}$ be the corresponding 
projector onto $N(\Ti)$, which has thus finite-dimensional range by assumption. Then, 
since $\Ti\Tii$ is compact, 
$\Ti^\dagger \Ti \Tii = Q_{N(\Ti)^\bot} \Tii$ is a compact operator as well, and hence 
$\Tii = P \Tii + Q_{N(\Ti)^\bot} \Tii$ is the sum of a compact operator and one with 
finite-dimensional range, hence compact as well. This is a contradiction to $\Tii$ being non-compact. 
Thus  $\Ti$ cannot have a closed range. 
\end{proof}

\begin{rem}\rm 
The assumption on the finite-dimensionality of $N(\Ti)$ is necessary since otherwise $\Ti$ could indeed 
have a closed range: For instance, if $\Ti$ has closed range with infinite-dimensional nullspace, 
then we  take as $\Tii$ the projector onto $N(\Ti)$, which is non-compact. The product $\Ti\Tii = 0$ is 
thus compact. 
\end{rem} 

We can collect the findings into a theorem, which is the second main result of this section:
\begin{thm}\label{main2}
 Let $\Ti:\X \to \Y$ be a bounded operator. 
Then the following is equivalent: 
\begin{enumerate} 
\item\label{e:one} $\Ti$ has non-closed range or $N(\Ti)$ is infinite-dimensional. 
 \item\label{e:two} There exists a non-compact $\Tii:\X\to \X$ such that $\Ti\Tii$ is compact. 
 \end{enumerate}
\end{thm}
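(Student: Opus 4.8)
The plan is to obtain Theorem~\ref{main2} as an essentially immediate consequence of Theorem~\ref{main1} combined with the observation already recorded in the remark following it. Since statement~\ref{e:one} is a disjunction while statement~\ref{e:two} is a single existence assertion, I would prove the two implications separately and handle~\ref{e:one} by a case split.

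For \ref{e:one}~$\Rightarrow$~\ref{e:two}: assume \ref{e:one} and distinguish the two (not mutually exclusive) cases. If $\Ti$ has non-closed range, Theorem~\ref{main1}\ref{one} directly yields a non-compact $\Tii:\X\to\X$ with $\Ti\Tii$ compact, and we are done. If instead $N(\Ti)$ is infinite-dimensional, take $\Tii:=P$, the orthogonal projector of $\X$ onto $N(\Ti)$. This $P$ is a bounded operator from $\X$ to $\X$, it is non-compact because it projects onto an infinite-dimensional closed subspace (it fixes an orthonormal sequence), and $\Ti\Tii=\Ti P=0$ since $\range(P)\subseteq N(\Ti)$; the zero operator is of course compact. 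In either case \ref{e:two} holds.

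For \ref{e:two}~$\Rightarrow$~\ref{e:one}: argue by contraposition. Suppose \ref{e:one} fails, i.e.\ $\Ti$ has closed range \emph{and} $N(\Ti)$ is finite-dimensional. Then the hypotheses of Theorem~\ref{main1}\ref{two} concerning the nullspace are met; were there a non-compact $\Tii:\X\to\X$ with $\Ti\Tii$ compact, that theorem would force $\Ti$ to have non-closed range, contradicting the closedness of $\range(\Ti)$. Hence no such $\Tii$ exists, i.e.\ \ref{e:two} fails. Combining the two implications gives the equivalence.

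I do not anticipate any real obstacle: the analytic substance — the construction (Proposition~\ref{pro2}) of an infinite-dimensional closed subspace on which $\Ti$ restricts to a compact operator, and the Moore--Penrose/pseudo-inverse argument for the converse — has already been carried out inside Theorem~\ref{main1}. The only point demanding care is the bookkeeping of the case split: one must notice that the ``closed range with infinite-dimensional nullspace'' scenario is disposed of by the projector $\Tii=P$ rather than by Theorem~\ref{main1}\ref{one}, and that the finite-dimensionality hypothesis on $N(\Ti)$ in Theorem~\ref{main1}\ref{two} is precisely the negation of the second disjunct in~\ref{e:one}, so that the contrapositive lines up exactly.
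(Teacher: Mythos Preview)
Your proposal is correct and matches the paper's own proof essentially line for line: both directions are reduced to Theorem~\ref{main1}, with the infinite-dimensional-nullspace case handled by the projector $\Tii=P$ onto $N(\Ti)$ exactly as in the remark preceding Theorem~\ref{main2}. The only difference is that you spell out the contrapositive for \ref{e:two}~$\Rightarrow$~\ref{e:one} more explicitly than the paper does.
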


\begin{proof}
\ref{e:one} $\Rightarrow$ \ref{e:two}: In case $\Ti$ has non-closed range, we have 
shown this in Theorem~\ref{main1} and in case of an infinite-dimensional $N(\Ti)$ in 
the above remark. 

The case \ref{e:two} $\Leftarrow$ \ref{e:one} follows from Theorem~\ref{main1}, item \ref{two}.
\end{proof}

\begin{rem}\rm 
Theorem~\ref{main2} reflects essentially a result of \cite{Pfaff70}, which was shown in an even more 
general setting of Banach spaces but  stated in rather abstract   operator algebra form.
The case of $\Tii\Ti$ being compact for a given $\Ti$ and non-compact $\Tii$ can 
be easily treated by taking adjoints:  $(\Tii\Ti)^* = \Ti^* \Tii^*$. 
Since $\Tii\Ti$ is compact if and only if $(\Tii\Ti)^*$ is, we could express  the analogous results of
Theorem~\ref{main2} with $\Tii\Ti$ in place of $\Ti\Tii$ and 
$N(\Ti^*)$ in place of $N(\Ti)$.

\end{rem} 

We note that we did not require the operator $\Tii$ 
in Theorem~\ref{main2} to 
be ill-posed in the sense of Nashed. However, under 
certain injectivity conditions,  the 
property $\Ti \Tii = \text{compact}$ is {\em equivalent} to 
type~I ill-posedness of {\em both} operators, as it is the case for 
our example with the 
Ces\`{a}ro- and Hausdorff operator ($\Ti = \Ha,\Tii = C^*$):
\begin{cor}
Let $\Ti:\X\to \Y$ and $\Tii:\Y \to \Z$ be noncompact. 
Assume that $\Ti$ and $\Tii^*$ are injective, and 
assume that $\Ti \Tii$ is compact. 
Then both $\Ti$ and $\Tii$ have non-closed range, i.e., they are 
ill-posed of type~I. 
\end{cor}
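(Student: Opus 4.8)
The plan is to read $\Ti\Tii$ as the composition with outer factor $\Ti$ and inner factor $\Tii$ (as in the running example $\Ti=\Ha$, $\Tii=C^*$) and to obtain both assertions from the converse part, item~\ref{two}, of Theorem~\ref{main1}, applied once to $\Ti\Tii$ itself and once to the adjoint composition $(\Ti\Tii)^*=\Tii^*\Ti^*$. Recall that item~\ref{two} of Theorem~\ref{main1} states that whenever the outer factor has finite-dimensional null space, the inner factor is non-compact, and the product is compact, then the outer factor must have non-closed range; the proof given there uses only the bounded pseudo-inverse of the outer factor and the fact that a finite-rank perturbation of a compact operator is compact, never that the inner factor maps a space into itself, so it carries over verbatim to the present setting.

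First I would treat $\Ti$: since $\Ti$ is injective, $N(\Ti)=\{0\}$ is finite-dimensional, and by hypothesis $\Tii$ is non-compact while $\Ti\Tii$ is compact; item~\ref{two} of Theorem~\ref{main1} (with $\Ti$ as outer factor and $\Tii$ as inner factor) then gives that $\Ti$ has non-closed range, so $\Ti$ is ill-posed of type~I.

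Next I would treat $\Tii$ by passing to adjoints: $(\Ti\Tii)^*=\Tii^*\Ti^*$ is compact, $\Ti^*$ is non-compact by Schauder's theorem (see, e.g., \cite{Rud91}) since $\Ti$ is, and $N(\Tii^*)=\{0\}$ is finite-dimensional since $\Tii^*$ is injective. Applying item~\ref{two} of Theorem~\ref{main1} to $\Tii^*\Ti^*$, now with $\Tii^*$ as outer factor and $\Ti^*$ as inner factor, yields that $\Tii^*$ has non-closed range; by Banach's closed range theorem, $\range(\Tii)$ is closed if and only if $\range(\Tii^*)$ is, so $\range(\Tii)$ is non-closed and $\Tii$ too is ill-posed of type~I.

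The main point to watch — the closest thing to an obstacle — is the asymmetry of Theorem~\ref{main1}, which constrains only the null space of the left (outer) factor; consequently the statement for $\Tii$ cannot be read off directly and must be obtained by dualizing the composition, with the hypothesis ``$\Tii^*$ injective'' supplying exactly the finite-dimensional-null-space condition for the dualized outer factor $\Tii^*$, and the closed range theorem serving to transport non-closedness of the range back from $\Tii^*$ to $\Tii$. Beyond this bookkeeping, both implications are immediate from Theorem~\ref{main1}.
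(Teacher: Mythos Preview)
Your proof is correct and follows precisely the route the paper indicates: the paper does not spell out a proof of the corollary, but the remark immediately preceding it (``The case of $\Tii\Ti$ being compact \ldots\ can be easily treated by taking adjoints: $(\Tii\Ti)^* = \Ti^*\Tii^*$'') together with Theorem~\ref{main1}, item~\ref{two}, points exactly to the argument you give --- apply item~\ref{two} directly to get non-closed range for $\Ti$, then dualize and apply it again to get non-closed range for $\Tii^*$, transferring back via the closed range theorem. Your observation that the proof of item~\ref{two} never uses that the inner factor is an endomorphism is also correct and worth making explicit.
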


\section*{Acknowledgment} 
The second named author has been supported by the German Science Foundation (DFG) under grant~HO~1454/13-1 (Project No.~453804957).


\begin{thebibliography}{10}

\bibitem{Barria80}
J.~Barr\'{\i}a.
\newblock On power compact operators.
\newblock {\em Proc. Amer. Math. Soc.}, 80(1):123--124, 1980.

\bibitem{Brown65}
A.~Brown, P.~R. Halmos, and A.~L. Shields.
\newblock Ces\`aro operators.
\newblock {\em Acta Sci. Math. (Szeged)}, 26:125--137, 1965.

\bibitem{Conway00}
J.~B. Conway.
\newblock {\em A Course in Operator Theory}, volume~21 of {\em Graduate Studies
  in Mathematics}.
\newblock American Mathematical Society, Providence, RI, 2000.

\bibitem{DFH24}
Y.~Deng, H.-J. Fischer, and B.~Hofmann.
\newblock The degree of ill-posedness for some composition governed by the
  {C}es\`{a}ro operator.
\newblock January 2024.
\newblock Paper submitted. https://arxiv.org/pdf/2401.11411.pdf.

\bibitem{DS58}
N.~Dunford and J.~T. Schwartz.
\newblock {\em Linear {O}perators. {I}. {G}eneral {T}heory}, volume Vol. 7 of
  {\em Pure and Applied Mathematics}.
\newblock Interscience Publishers, Inc., New York; Interscience Publishers
  Ltd., London, 1958.
\newblock With the assistance of W. G. Bade and R. G. Bartle.

\bibitem{EHN96}
H.~W. Engl, M.~Hanke, and A.~Neubauer.
\newblock {\em Regularization of Inverse Problems}.
\newblock Kluwer, Dortrecht, 1996.

\bibitem{Freitag05}
M.~Freitag and B.~Hofmann.
\newblock Analytical and numerical studies on the influence of multiplication
  operators for the ill-posedness of inverse problems.
\newblock {\em J. Inverse Ill-Posed Probl.}, 13(2):123--148, 2005.

\bibitem{Gerthnum}
D.~Gerth.
\newblock A note on numerical singular values of compositions with non-compact
  operators.
\newblock {\em Electron. Trans. Numer. Anal.}, 57:57--66, 2022.

\bibitem{Gerth22}
D.~Gerth and B.~Hofmann.
\newblock A note on open questions asked to analysis and numerics concerning
  the {H}ausdorff moment problem.
\newblock {\em Eurasian Journal of Mathematical and Computer Applications},
  10(1):40--50, 2022.
\newblock
  https://drive.google.com/file/d/1JM3UehNMWpog2xFwx2R2SBZ6TxQR-3ar/view.

\bibitem{Gerth21}
D.~Gerth, B.~Hofmann, C.~Hofmann, and S.~Kindermann.
\newblock The {H}ausdorff moment problem in the light of ill-posedness of
  type~{I}.
\newblock {\em Eurasian Journal of Mathematical and Computer Applications},
  9(2):57--87, 2021.
\newblock \newline
  https://drive.google.com/file/d/1HrfnEh2qYQms2xgCvpcCcFXBKBt8Pp0Y/view.

\bibitem{GMF}
I.~C. Gohberg, A.~S. Markus, and I.~A. Fel{\textquotesingle}dman.
\newblock Normally solvable operators and ideals associated with them.
\newblock In {\em Fourteen Papers on Functional Analysis and Differential
  Equations}, volume~61 of {\em American Mathematical Society Translations.
  Series 2}, pages 63--83, Providence, 1967. American Mathematical Society.

\bibitem{Haus23}
F.~Hausdorff.
\newblock Momentprobleme f\"ur ein endliches intervall ({G}erman).
\newblock {\em Math.~Z.}, 16(1):220--248, 1923.

\bibitem{HeinHof03}
T.~Hein and B.~Hofmann.
\newblock On the nature of ill-posedness of an inverse problem arising in
  option pricing.
\newblock {\em Inverse Problems}, 19:1319--1338, 2003.

\bibitem{Hof06}
B.~Hofmann.
\newblock Approximate source conditions in {T}ikhonov-{P}hillips regularization
  and consequences for inverse problems with multiplication operators.
\newblock {\em Math. Methods Appl. Sci.}, 29(3):351--371, 2006.

\bibitem{HofFlei99}
B.~Hofmann and G.~Fleischer.
\newblock Stability rates for linear ill-posed problems with compact and
  non-compact operators.
\newblock {\em Z. Anal. Anwendungen}, 18(2):267--286, 1999.

\bibitem{HofKind10}
B.~Hofmann and S.~Kindermann.
\newblock On the degree of ill-posedness for linear problems with non-compact
  operators.
\newblock {\em Methods Appl. Anal.}, 17(4):445--461, 2010.

\bibitem{HofMat22}
B.~Hofmann and P.~Math\'{e}.
\newblock The degree of ill-posedness of composite linear ill-posed problems
  with focus on the impact of the non-compact {H}ausdorff moment operator.
\newblock {\em Electron. Trans. Numer. Anal.}, 57:1--16, 2022.

\bibitem{HofTau97}
B.~Hofmann and U.~Tautenhahn.
\newblock On ill-posedness measures and space change in {S}obolev scales.
\newblock {\em Z.~Anal.~Anwendungen}, 16(4):979--1000, 1997.

\bibitem{HofWolf05}
B.~Hofmann and L.~von Wolfersdorf.
\newblock Some results and a conjecture on the degree of ill-posedness for
  integration operators with weights.
\newblock {\em Inverse Problems}, 21(2):427--433, 2005.

\bibitem{HofWolf09}
B.~Hofmann and L.~von Wolfersdorf.
\newblock A new result on the singular value asymptotics of integration
  operators with weights.
\newblock {\em J. Integral Equations Appl.}, 21(2):281--295, 2009.

\bibitem{Leib73}
G.~M. Leibowitz.
\newblock Spectra of finite range {C}es\`aro operators.
\newblock {\em Acta Sci. Math. (Szeged)}, 35:27--29, 1973.

\bibitem{nmh22}
M.~T. Nair, P.~Mathé, and B.~Hofmann.
\newblock Regularization of linear ill-posed problems involving multiplication
  operators.
\newblock {\em Applicable Analysis}, 101(2):714–732, 2022.

\bibitem{Nashed86}
M.~Z. Nashed.
\newblock A new approach to classification and regularization of ill-posed
  operator equations.
\newblock In {\em {I}nverse and {I}ll-posed {P}roblems ({S}ankt {W}olfgang,
  1986), volume~4 of Notes Rep.~Math.~Sci.~Engrg.}, pages 53--75. Academic
  Press, Boston, MA, 1987.

\bibitem{Olsen71}
C.~L. Olsen.
\newblock A structure theorem for polynomially compact operators.
\newblock {\em Amer. J. Math.}, 93:686--698, 1971.

\bibitem{Pfaff70}
W.~Pfaffenberger.
\newblock On the ideals of strictly singular and inessential operators.
\newblock {\em Proc. Amer. Math. Soc.}, 25:603--607, 1970.

\bibitem{Ramlau20}
R.~Ramlau, C.~Koutschan, and B.~Hofmann.
\newblock On the singular value decomposition of {$n$}-fold integration
  operators.
\newblock In {\em Inverse problems and related topics}, volume 310 of {\em
  Springer Proc. Math. Stat.}, pages 237--256. Springer, 2020.

\bibitem{Rosenblum58}
M.~Rosenblum.
\newblock On the {H}ilbert matrix. {II}.
\newblock {\em Proc. Amer. Math. Soc.}, 9:581--585, 1958.

\bibitem{Rud91}
W.~Rudin.
\newblock {\em Functional Analysis}.
\newblock International Series in Pure and Applied Mathematics. McGraw-Hill,
  Inc., New York, second edition, 1991.

\bibitem{Schech}
M.~Schechter.
\newblock Riesz operators and {F}redholm perturbations.
\newblock {\em Bull. Amer. Math. Soc.}, 74:1139--1144, 1968.

\bibitem{Tacon79}
D.~G. Tacon.
\newblock Two characterizations of power compact operators.
\newblock {\em Proc. Amer. Math. Soc.}, 73(3):356--360, 1979.

\bibitem{Tod54}
J.~Todd.
\newblock The condition of the finite segments of the {H}ilbert matrix.
\newblock In {\em Contributions to the solution of systems of linear equations
  and the determination of eigenvalues}, National Bureau of Standards Applied
  Mathematics Series, No.~39, pages 109--116. U.S. Govt.~Printing Office,
  Washington, DC, 1954.

\end{thebibliography}
\end{document}